\allowdisplaybreaks \numberwithin{equation}{section}
\numberwithin{equation}{section}
\newtheorem{theorem}{Theorem}[section]
\newtheorem{lemma}[theorem]{Lemma}
\newtheorem*{Yudovich's Theorem}{Yudovich's Theorem}
\theoremstyle{definition}
\newtheorem{definition}[theorem]{Definition}
\theoremstyle{remark}
\newtheorem{remark}[theorem]{Remark}
\begin{document}

\title
[Nonlinear stability of planar steady Euler flows]{Nonlinear stability of planar steady Euler flows associated with semistable solutions of elliptic problems}

 \author{Guodong Wang}
\address{Institute for Advanced Study in Mathematics, Harbin Institute of Technology, Harbin 150001, P.R. China}
\email{wangguodong@hit.edu.cn}


\begin{abstract}
This paper is devoted to the study of nonlinear stability of steady  incompressible Euler flows in two dimensions. We prove that a steady Euler flow is nonlinearly stable in $L^p$ norm of the vorticity if its stream function is a semistable solution of some semilinear elliptic problem with nondecreasing nonlinearity. The idea of the proof is to show that such a flow has strict local maximum energy among flows whose vorticities are rearrangements of a given function, with the help of an improved version of Wolansky and Ghil's stability theorem. The result can be regarded as an extension of Arnol'd's second stability theorem.
\end{abstract}

\maketitle
\section{Introduction and main result}
\subsection{2D Euler equation}
Let us start by considering the following two-dimensional (2D) Euler equation governing the motion of an inviscid homogeneous fluid
\begin{equation}\label{euler}
\begin{cases}
\partial_t\mathbf v+(\mathbf v\cdot\nabla)\mathbf v=-\nabla P,&  t>0, x\in D,\\
\nabla \cdot \mathbf v=0,
\end{cases}
\end{equation}
where $D\subset\mathbb R^2$ is a bounded  simply-connected  domain with a smooth boundary, $\mathbf v=(v^1,v^2)$ is the velocity field, and $P$ is the pressure. For boundary condition, we impose  
\begin{align}\label{bc}
\mathbf v\cdot\mathbf n=0, \quad  x\in\partial D,
\end{align}
meaning that there is no mass flow across $\partial D$.

Introduce the scalar vorticity 
$\omega=\partial_{x^1}v^2-\partial_{x^2}v^1,$ the signed magnitude of the vorticity vector curl$\mathbf v$. Below we show that the Euler equation \eqref{euler} has an equivalent vorticity form. 
First by taking the curl on both sides of the first equation of \eqref{euler} we get
\begin{equation}\label{v1}
\partial_t\omega+\mathbf v\cdot\nabla\omega=0.
\end{equation}
On the other hand, the divergence-free condition $\nabla\cdot\mathbf v=0$ ensures the existence of some function $\psi$, called the stream function, such that
\[\mathbf v=\nabla^\perp\psi,\]
where $\nabla^\perp\psi=(\partial_{x^2}\psi,-\partial_{x^1}\psi)$, i.e., the clockwise rotation of $\nabla\psi$ through $\pi/2$. Clearly $\psi$ and $\omega$ are related via the following Poisson's equation
\[-\Delta \psi=\omega.\]
Moreover, the impermeability boundary condition \eqref{bc} implies that  $\psi$ is a constant  on $\partial D$ (recall that $D$ is simply-connected). Without loss of generality, we always assume that the stream function vanishes on $\partial D$. 
Thus $\psi$ is uniquely determined by $\omega$ as follows
\[\psi=\mathcal G\omega,\]
where $\mathcal G$ is the inverse of $-\Delta$ in $D$ with zero Dirichlet boundary condition. To summarize,  we have obtained  the vorticity form of the Euler equation 
\begin{equation}\label{vor}
\partial_t\omega+\nabla^\perp\mathcal G\omega\cdot\nabla\omega=0.
\end{equation}
In the rest of this paper we will mainly be focusing on the vorticity equation \eqref{vor}. 

Global well-posedness  of the initial value problem for \eqref{vor} has been studied by many authors in various function spaces in the past few decades. See \cite{De, DM, Wo, Y} for example. Among these results, a very important and interesting case is when the initial vorticity is a bounded function, for which a satisfactory existence and uniqueness result has been established  by Yudovich \cite{Y} in the 1960s. Of course, in this setting the vorticity can be discontinuous,  so we need to interpret \eqref{vor} in the weak sense. See \eqref{tweak} below.

The following version of Yudovich's result can be found in Burton's paper \cite{B5}.
\begin{Yudovich's Theorem}\label{A}
For any $\omega_0\in L^\infty(D)$, there exists a unique weak solution $\omega \in L^\infty((0,+\infty)\times D)$ such that
\begin{equation}\label{tweak}
  \int_D\omega_0(x)\zeta(0,x)dx+\int_0^{+\infty}\int_D\omega \left(\partial_t\zeta+\nabla^\perp \mathcal G\omega\cdot\nabla\zeta \right)dxdt=0,\quad \forall \,\zeta\in C_c^{\infty}(\mathbb R\times D).
  \end{equation}
 Moreover, this unique weak solution satisfies
\begin{itemize}
\item[(i)] $\omega\in C([0,+\infty);L^p(D))$ for any $p\in[1,+\infty)$;
\item[(ii)] $\omega(t,\cdot)\in \mathcal{R}_{\omega_0}$ for all $t> 0$, where $\mathcal R_{\omega_0}$ denotes the set of functions that are rearrangements of $\omega_0$, that is,
\[\mathcal R_{\omega_0}=\{w\in L^1_{\rm loc}(D)\mid \mathcal |\{x\in D\mid w(x)>a\}|=|\{ x\in D\mid \omega_0(x)>a\}|,\,\,\forall\,a\in\mathbb R\},\]
where $|\cdot|$ is the two-dimensional Lebesgue measure;
\item[(iii)] $E(\omega(t,\cdot))=E(\omega_0)$ for any $t>0$, where 
\[E(\omega(t,\cdot))=\frac{1}{2}\int_D|\mathbf v(t,x)|^2dx=\frac{1}{2}\int_D\omega(t,x)\mathcal G\omega(t,x)dx\] 
is the kinetic energy of the fluid at time $t$.
\end{itemize}
 \end{Yudovich's Theorem}

\subsection{Steady solution and nonlinear stability}
A weak solution to the vorticity equation is steady if it does not depend on the time variable. Therefore, $\omega$ is a steady solution if and only if
\begin{equation}\label{sso}
\nabla^\perp \mathcal G\omega\cdot\nabla\omega=0,\quad x\in D.
\end{equation}
Of course, if $\omega\in L^\infty(D)$, we need to interpret \eqref{sso} in the following weak sense
\begin{equation}\label{wsso}
\int_D\omega\nabla^\perp\mathcal G\bar\omega\cdot\nabla \xi dx=0,\quad\forall\,\xi\in C_c^\infty(D).
\end{equation}

There are many types of steady solutions, most of which are associated with solutions of semilinear elliptic equations. To see this,  suppose $\bar\psi\in C^2(\bar D)$ satisfies
\begin{equation}\label{seel}
\begin{cases}
-\Delta \bar\psi=g(\bar\psi),&x\in D,\\
\bar\psi=0,&x\in\partial D,
\end{cases}
\end{equation}
where $g\in C^1(\mathbb R).$
Then it is clear that  $\bar\omega=-\Delta \bar\psi=g(\bar\psi)$ belongs to $C^1(\bar D)$ and satisfies \eqref{sso} in the classical sense.
If $g$ is not $C^1$, a weak solution of \eqref{seel} also corresponds to a steady solution under very general assumptions.
See \cite{CW2}.

Given a steady solution $\bar\omega,$  a very natural problem is to study its stability. In this paper, we only consider \emph{nonlinear stability}, also called  stability of Lyapunov type.
Roughly speaking,  a steady solution $\bar\omega$ is said to be nonlinearly stable, if for any initial vorticity sufficiently ``close" to $\bar\omega$, the evolved vorticity remains ``close" to $\bar\omega$ for \emph{all time}. 

The precise definition of nonlinear stability is stated as follows.
\begin{definition}\label{dns}
Let $\bar \omega\in L^\infty(D)$ be a steady solution to the vorticity equation \eqref{vor},  $\mathcal S$ be a subset of $L^\infty(D)$, and $\|\cdot\|$ be a norm in the set of bounded measurable functions. 
If for any $\varepsilon>0,$ there exists $\delta>0,$ such that for any $\omega_0\in \mathcal S,$ $\|\omega_0-\bar\omega\|<\delta,$ it holds that $\|\omega(t,\cdot)-\bar\omega\|<\varepsilon$ for all $t>0$, then $\bar\omega$ is said to be nonlinearly stable in the norm $\|\cdot\|$ with respect to initial perturbations in $\mathcal S$. Here $\omega(t,\cdot)$ is the unique weak solution to the vorticity equation with initial vorticity $\omega_0.$
\end{definition}

Commonly used norms include:  
(1)  the enstrophy norm $\|\omega\|_{L^2(D)};$
(2)  or more generally, the $L^p$ norm of the vorticity $\|\omega\|_{L^p(D)}$, $1\leq p<+\infty$;
(3) the energy norm $$\|\nabla\mathcal G\omega\|_{L^2(D)}=(2E(\omega))^{1/2}.$$

\begin{remark}\label{r0}
Obviously, the large $\mathcal S$ is, the more stable $\bar \omega$ is. However, there are no direct relation between stabilities in two different norms, even one of them is stronger than the other. For example, the flow associated with the first eigenfunction of $-\Delta$ in $D$ with zero boundary condition is nonlinearly stable in $L^p$ norm of the vorticity for any $p\in[1,+\infty)$, but whether it is nonlinearly stable in the energy norm is still an open problem. See Section 4 in \cite{LZ2}.
\end{remark}

Of course, it is also meaningful to consider other 
 types of stability of 2D steady Euler flows, including instability and linear stability. We refer the reader to \cite{K,LZ1,LZ2,LZ3} and the references therein.

\subsection{Burton's stability criterion}

For a function $\bar\omega\in  L^\infty(D)$, if it is a local  maximizer or minimizer of $E$ on $\mathcal R_{\bar\omega}$, then it must be a steady solution. In fact, for any $\xi\in C_c^\infty(D)$,  define a family of area-preserving transformations $\Phi_t: D\to D$ 
\[\begin{cases}
\frac{d\Phi_{t}(x)}{dt}=\nabla^\perp \xi(x),&t\in\mathbb R,\\
\Phi_0(x)=x,&x\in D.
\end{cases}
\]
Then $\bar\omega(\Phi_{-t}(\cdot)\in \mathcal R_{\bar\omega}$ for all $t\in\mathbb R,$ thus $t=0$ is a local maximum or minimum point of $E(\bar\omega(\Phi_{-t}(\cdot))$, which implies
\begin{equation}\label{evar}
\frac{d}{dt}E(\bar\omega(\Phi_{-t}(\cdot))\bigg|_{t=0}=0.
\end{equation}
After some calculations we get  \eqref{wsso} from \eqref{evar}.

In 2005, Burton gave a sufficient condition for a steady solution to be nonlinearly stable, showing that if $\bar\omega$ is an \emph{isolated} local maximizer or minimizer of $E$ on $\mathcal R_{\bar\omega}$, then nonlinear stability holds. 
In Section 2, we will see that many important stability results in the literature, including Arnol'd's first and second stability theorems and Wolansky and Ghil's stability theorem,   can be put into this setting.
 \begin{theorem}[Burton, \cite{B5}]\label{bcr}
Let $p\in(1,+\infty)$ and $\bar\omega\in L^\infty(D)$ be a steady solution. Suppose that $\bar\omega$ is an isolated local maximizer  (minimizer) of $E$ over $\mathcal R_{\bar\omega}$, i.e., there exists $\delta>0$ such that 
\begin{equation}\label{l1lq}
E(\omega)<(>)E(\bar\omega), \quad\forall\,\omega\in \mathcal R_{\bar\omega},\,0<\|\omega-\bar\omega\|_{L^1(D)}<\delta,
\end{equation}
then $\bar\omega $ is nonlinearly stable in the $L^p$ norm of the vorticity with respect to initial perturbations in $L^\infty(D)$. \end{theorem}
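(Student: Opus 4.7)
The plan is to argue by contradiction, combining the continuity in time of Yudovich solutions, a rearrangement-transfer step that moves between $\mathcal{R}_{\omega_{0,n}}$ and $\mathcal{R}_{\bar\omega}$, energy conservation, and the weak continuity of $E$. Suppose $\bar\omega$ were not nonlinearly stable in $L^p$; then there would exist $\varepsilon_0>0$, initial data $\omega_{0,n}\in L^\infty(D)$ with $\|\omega_{0,n}-\bar\omega\|_{L^p}\to 0$, and times $t_n>0$ such that the unique Yudovich solutions $\omega_n$ satisfy $\|\omega_n(t_n,\cdot)-\bar\omega\|_{L^p}\ge\varepsilon_0$. After truncating we may assume $\sup_n\|\omega_{0,n}\|_{L^\infty}<\infty$, and hence $\sup_{n,t}\|\omega_n(t,\cdot)\|_{L^\infty}<\infty$ by Yudovich's theorem. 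Fix $\varepsilon_1\in(0,\varepsilon_0]$ so small that, under the uniform $L^\infty$ bound, $\|u\|_{L^p}\le 2\varepsilon_1$ forces $\|u\|_{L^1}<\delta$. Using the continuity $\omega_n\in C([0,\infty);L^p)$ from Yudovich (i) and the intermediate value theorem, pick the first $\tau_n\in(0,t_n]$ with $\|\omega_n(\tau_n,\cdot)-\bar\omega\|_{L^p}=\varepsilon_1$.

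The main technical step is to pass from $\omega_n(\tau_n,\cdot)\in\mathcal{R}_{\omega_{0,n}}$ to a companion in $\mathcal{R}_{\bar\omega}$. Since $\omega_n(\tau_n,\cdot)$ and $\omega_{0,n}$ are equimeasurable, one can write $\omega_n(\tau_n,\cdot)=\omega_{0,n}^{\#}\circ\sigma_n$ for some measure-preserving $\sigma_n\colon D\to(0,|D|)$, where $\omega_{0,n}^{\#}$ denotes the decreasing rearrangement. Setting $\tilde\omega_n:=\bar\omega^{\#}\circ\sigma_n\in\mathcal{R}_{\bar\omega}$ and using the $L^p$-contractivity of decreasing rearrangements,
\[
\|\tilde\omega_n-\omega_n(\tau_n,\cdot)\|_{L^p(D)}=\|\bar\omega^{\#}-\omega_{0,n}^{\#}\|_{L^p((0,|D|))}\le\|\bar\omega-\omega_{0,n}\|_{L^p(D)}\longrightarrow 0.
\]
By the triangle inequality $\|\tilde\omega_n-\bar\omega\|_{L^p}\to\varepsilon_1$, and by the choice of $\varepsilon_1$ we have $0<\|\tilde\omega_n-\bar\omega\|_{L^1}<\delta$ for $n$ large; the isolated-maximum hypothesis then forces $E(\tilde\omega_n)<E(\bar\omega)$.

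Energy conservation (Yudovich (iii)) and the continuity of $E$ under $L^p$-convergence in an $L^\infty$-bounded family (via compactness of the Green operator $\mathcal G$) give
\[
E(\tilde\omega_n)=E(\omega_n(\tau_n,\cdot))+o(1)=E(\omega_{0,n})+o(1)\longrightarrow E(\bar\omega).
\]
To close the contradiction, extract a subsequence $\tilde\omega_n\rightharpoonup\omega_\infty$ weakly in $L^2(D)$; weak continuity of $E$ forces $E(\omega_\infty)=E(\bar\omega)$. If $\omega_\infty=\bar\omega$, then weak convergence together with the preserved norm $\|\tilde\omega_n\|_{L^p}=\|\bar\omega\|_{L^p}$ and the uniform convexity of $L^p$ ($1<p<\infty$) upgrades, by the Radon–Riesz property, to strong $L^p$-convergence $\tilde\omega_n\to\bar\omega$, directly contradicting $\|\tilde\omega_n-\bar\omega\|_{L^p}\to\varepsilon_1>0$. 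The hard part is the residual case $\omega_\infty\ne\bar\omega$: here one uses Burton's variational theory for rearrangement classes, which identifies the maximizers of $E$ on the weak closure $\overline{\mathcal{R}_{\bar\omega}}^{w}$ with elements of $\mathcal{R}_{\bar\omega}$ of the form $g(\mathcal{G}\omega_\infty)$ for some nondecreasing $g$, so that one can extract an element of $\mathcal{R}_{\bar\omega}$ in the punctured $L^1$-ball of radius $\delta$ around $\bar\omega$ at which $E$ equals $E(\bar\omega)$, contradicting the isolated-maximum hypothesis. I expect this final structural identification, together with the rearrangement-transfer step in the second paragraph, to be the principal obstacles; the minimizer case follows by reversing signs throughout.
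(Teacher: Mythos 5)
First, note that the paper does not prove this theorem at all: it is quoted from Burton \cite{B5} and used as a black box, so there is no in-paper argument to compare yours against; your attempt has to be judged as a reconstruction of Burton's proof. The skeleton you set up is the right one (contradiction, first exit time $\tau_n$ via the $C([0,+\infty);L^p)$ continuity, the companion $\tilde\omega_n=\bar\omega^{\#}\circ\sigma_n\in\mathcal R_{\bar\omega}$ built from Ryff's representation $\omega_n(\tau_n,\cdot)=\omega_{0,n}^{\#}\circ\sigma_n$ together with the $L^p$-contractivity of decreasing rearrangement, energy conservation, and weak compactness), and your case $\omega_\infty=\bar\omega$ is handled correctly by Radon--Riesz. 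One blemish: ``after truncating we may assume $\sup_n\|\omega_{0,n}\|_{L^\infty}<\infty$'' is not legitimate, since truncating the initial data changes the Yudovich solution entirely; but it is also unnecessary, because for $p>1$ H\"older's inequality on the bounded domain $D$ gives $\|u\|_{L^1}\le|D|^{1-1/p}\|u\|_{L^p}$ and $E$ is locally Lipschitz on $L^p(D)$ by elliptic regularity, so nothing in your argument actually needs the uniform $L^\infty$ bound.

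The genuine gap is the residual case $\omega_\infty\ne\bar\omega$, which is precisely the hard core of Burton's theorem, and the one-sentence appeal you make there does not work. Burton's identification of maximizers with rearrangements of the form $g(\mathcal G\omega)$ applies to \emph{global} maximizers of $E$ over the full weak closure $\overline{\mathcal R_{\bar\omega}}^{w}$; your $\omega_\infty$ is only known to satisfy $E(\omega_\infty)=E(\bar\omega)$, where $E(\bar\omega)$ is merely a \emph{local} maximum value, together with $\|\omega_\infty-\bar\omega\|_{L^p}\le\varepsilon_1$ from weak lower semicontinuity. A constrained version would require maximizing over $\overline{\mathcal R_{\bar\omega}}^{w}\cap \bar B_r(\bar\omega)$, and there the ball constraint can be active, so the maximizer may be an extreme point created by the sphere rather than an element of $\mathcal R_{\bar\omega}$, and the Burton--McLeod linearization argument breaks down. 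Nor is it explained how to ``extract an element of $\mathcal R_{\bar\omega}$ in the punctured $\delta$-ball at which $E$ equals $E(\bar\omega)$'': weakly approximating $\omega_\infty$ by genuine rearrangements $\rho_k\rightharpoonup\omega_\infty$ gives $E(\rho_k)\to E(\bar\omega)$ but no upper control on $\|\rho_k-\bar\omega\|_{L^1}$, and even when $\|\rho_k-\bar\omega\|_{L^1}<\delta$ one only recovers $E(\rho_k)<E(\bar\omega)$, which is consistent with the convergence and yields no contradiction. Because the hypothesis is only a strict, not quantitatively gapped, local maximum, closing this case requires the specific machinery of \cite{B5}, which you have not supplied.
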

 \begin{remark}
 The $L^1$ norm in \eqref{l1lq} can be replaced by $L^q$ norm for any $q\in[1,+\infty)$. In fact,  since $\mathcal R_{\bar\omega}$ is obviously a bounded subset of $L^\infty(D)$, the $L^1$ norm and $L^q$ norm are two equivalent norms on $\mathcal R_{\bar\omega}$ for any $q\in[1,+\infty).$
\end{remark}

The case of local minimizers of $E$ on the rearrangement class of some given function $\tilde\omega$ is simple. In fact, by Burton and McLeod \cite{BM},  
\begin{itemize}
\item If $\tilde\omega$ is two-signed, there is no local minimizer of $E$ on $\mathcal R_{\tilde\omega}$;
\item  If $\tilde\omega$ is one-signed, any local minimizer of $E$ on $\mathcal R_{\tilde\omega}$ must be a global minimizer;
\item If $\tilde\omega$ is one-signed, there exists exactly one global minimizer of $E$ on $\mathcal R_{\tilde\omega}$.

\end{itemize}
As to local maximizers, the situation becomes quite complicated. In the literature, a large class steady solutions were obtained by solving a certain variational problem of the vorticity. See \cite{BG, B1,B2,B3,CWCV,CWS,CWN,T}. In some cases, it can be checked that those solutions are local or global maximizers 
of the kinetic energy on some rearrangement class. See \cite{BG,CWCV,CWS,CWN,T}.
However, it is usually hard to check whether they are \emph {isolated}. 
In this paper, one main task is to verify that under the assumptions of Lemma \ref{lem1}, $\bar\omega$ is an isolated local maximizer of $E$ on $\mathcal R_{\bar\omega}.$

\subsection{Semistable solutions of elliptic equations}
Our aim is this paper is to study the nonlinear stability of steady Euler flows whose stream functions solve the following elliptic problem
  \begin{equation}\label{pesp1}
 \begin{cases}
-\Delta \psi=g(\psi),&x\in D,\\
\psi=0,&x\in\partial D.
\end{cases}
 \end{equation}
We will only be focusing on a particular class of solutions, i.e., \emph{semistable solutions} of \eqref{pesp1}. A solution $\bar\psi$ of \eqref{pesp1} is called a semistable solution if the linearized operator $-\Delta-g'(\bar\psi)$ of \eqref{pesp1} at $\bar\psi$ is nonnegative definite, i.e., 
\begin{equation}\label{sscs}
\int_D|\nabla \phi|^2 dx-\int_Dg'(\bar\psi)\phi^2 dx\geq 0,\,\forall\, \phi\in   H^1_0(D).
\end{equation}

Semistable solutions are a very common class of solutions of elliptic equations.
For example, if $\bar\psi\in H^1_0(D)$ is a local minimizer of the  functional
\begin{equation}\label{gfu}
\mathcal E(\psi)=\frac{1}{2}\int_D|\nabla \psi|^2dx-\int_DG(\psi)dx
\end{equation}
under every small perturbation in $C_c^\infty(D)$, where $G\in C^1(\mathbb R)$ such that $G'(s)=g(s)$ for $s\in[m,M]$, then $\bar\psi$ satisfies \eqref{pesp1} and \eqref{sscs}. This can be verified by computing the first and second variations of $\mathcal E$ at $\bar\psi$.
For a detailed discussion on more types of semistable solutions, we refer the interested reader to  \cite{Ca,CC}.

Note that \eqref{sscs} is equivalent to the following condition in terms of the vorticity
\begin{equation}\label{sscvyy}
\int_D\phi\mathcal G\phi dx-\int_Dg'(\mathcal G\bar\omega)\mathcal (G\phi)^2 dx\geq 0,\,\forall\, \phi\in   L^2(D),
\end{equation}
where $\bar\omega=-\Delta\bar\psi.$

\subsection{Main result}
Having made the above preparations, we are ready to state our main result.    

    \begin{theorem}\label{thm1}
Let $p\in(1,+\infty)$ be fixed. Let $\bar\omega\in L^\infty(D)$ be a steady solution satisfying 
\begin{equation}\label{gvvv}
\bar\omega=g(\mathcal G\bar\omega)\quad\mbox{in }D,
\end{equation}
where $g$ satisfies
\begin{itemize}
    \item [(a)] $g\in C^{1,\alpha}[m,M]$ for some $\alpha\in(0,1),$ where $m=\min_{\bar D} \mathcal G\bar\omega$ and $M=\max_{\bar D} \mathcal G\bar\omega$,
    \item [(b)] $g$ is nondecreasing on $[m,M]$.
\end{itemize}
Suppose that the following semistable condition holds
\begin{equation}\label{sscv}
\int_D\phi\mathcal G\phi dx-\int_Dg'(\mathcal G\bar\omega)\mathcal (G\phi)^2 dx\geq 0,\,\forall\, \phi\in   L^2(D).
\end{equation}
 Then  $\bar\omega$ is an isolated local maximizer of $E$ on $\mathcal R_{\bar\omega}$, thus by Theorem \ref{bcr} is nonlinearly stable in the $L^p$ norm of the vorticity with respect to initial perturbations in $L^\infty(D)$.
 \end{theorem}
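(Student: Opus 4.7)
The plan is to verify the hypotheses of Burton's criterion (Theorem \ref{bcr}) by proving that $\bar\omega$ is an isolated local maximizer of $E$ on $\mathcal R_{\bar\omega}$; the stated $L^p$ nonlinear stability then follows at once. Writing $\eta=\omega-\bar\omega$ and $\bar\psi=\mathcal G\bar\omega$, I would decompose
\begin{equation*}
E(\omega)-E(\bar\omega)=\int_D\eta\,\bar\psi\,dx+\tfrac12\int_D|\nabla\mathcal G\eta|^2\,dx,
\end{equation*}
control the linear term via the monotonicity assumption (b) together with the rearrangement constraint, and control the quadratic term via the semistability hypothesis \eqref{sscv}.

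For the linear term, the primitive $G(s):=\int_0^s g(\tau)\,d\tau$ is convex on $[m,M]$ by (b), so Young's inequality against its Legendre transform $G^*$ gives pointwise $\bar\psi\,\omega\le G(\bar\psi)+G^*(\omega)$, with equality at $(\bar\psi,\bar\omega)$ because $\bar\omega=g(\bar\psi)$. Since $\omega\in\mathcal R_{\bar\omega}$ forces $\int_DG^*(\omega)\,dx=\int_DG^*(\bar\omega)\,dx$, subtracting yields $\int_D\eta\,\bar\psi\,dx=-\int_DR_\omega\,dx$ with Young deficit $R_\omega\ge0$; a Taylor expansion shows $R_\omega\approx\eta^2/(2g'(\bar\psi))$ wherever $g'(\bar\psi)>0$. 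For the quadratic term, set $\phi=\mathcal G\eta\in H^1_0(D)$ and apply Cauchy--Schwarz in the form
\begin{equation*}
\left(\int_D|\nabla\phi|^2\,dx\right)^2=\left(\int_D\phi\,\eta\,dx\right)^2\le\left(\int_D g'(\bar\psi)\phi^2\,dx\right)\left(\int_D\frac{\eta^2}{g'(\bar\psi)}\,dx\right);
\end{equation*}
the first factor on the right is dominated by $\int_D|\nabla\phi|^2\,dx$ by \eqref{sscv}, yielding $\int_D|\nabla\phi|^2\,dx\le\int_D\eta^2/g'(\bar\psi)\,dx$. At leading quadratic order this matches $2\int_DR_\omega\,dx$, so the two contributions cancel to give $E(\omega)-E(\bar\omega)\le 0$.

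The hard part is promoting this formal cancellation into a \emph{strict} inequality that is uniform in a small $L^1$-neighborhood of $\bar\omega$, which is what isolated maximality demands. Two obstructions need to be handled. On the set $\{g'(\bar\psi)=0\}$ the pointwise weight $1/g'(\bar\psi)$ is meaningless and $R_\omega$ may vanish for $\omega\ne\bar\omega$, so over this region the argument must exploit equimeasurability rather than a quantitative quadratic lower bound; and the Cauchy--Schwarz step saturates precisely when $\eta\propto g'(\bar\psi)\phi$, a possibility that must be ruled out, together with equality in \eqref{sscv}, for $\omega$ close to but distinct from $\bar\omega$. I expect these issues to be resolved by the improved Wolansky--Ghil-type lemma advertised in the abstract, which should repackage the rearrangement structure, the $C^{1,\alpha}$ regularity of $g$ in assumption (a), and a strict second-order coercivity extracted from \eqref{sscv} into a direct proof that $\bar\omega$ is isolated in $\mathcal R_{\bar\omega}$.
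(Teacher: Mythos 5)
Your overall strategy (verify Burton's criterion, Theorem \ref{bcr}, by showing $\bar\omega$ is an isolated local maximizer of $E$ on $\mathcal R_{\bar\omega}$) is the right one, and the Young-inequality/equimeasurability treatment of the linear term is in spirit the supporting-functional idea the paper also uses. But there is a genuine gap at exactly the point you flag as ``the hard part,'' and the formal computation preceding it is carried out in a way the paper explicitly identifies as unworkable. Your quadratic bookkeeping lives in vorticity space: you expand the Young deficit as $R_\omega\approx\eta^2/(2g'(\bar\psi))$ and run Cauchy--Schwarz with the weight $1/g'(\bar\psi)$. Since $g$ is only nondecreasing, the set $\{g'(\bar\psi)=0\}$ can have positive measure (and does in the motivating examples with compactly supported vorticity), so the weight is meaningless there; even off that set, the Taylor remainder of $G^*$ involves $(G^*)''=1/g'$ at intermediate points and cannot be controlled by $L^1$-smallness of $\eta$. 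This is precisely the failure the paper cites as the reason the supporting functional must be built from the \emph{stream function}: its $\mathcal D_\lambda$ has Casimir part $\int_D G(\mathcal G\omega-\lambda)\,dx$, a function of $\mathcal G\omega$, so the Taylor remainder is controlled by $\|\mathcal G\phi_n\|_{L^\infty}\to 0$ via elliptic estimates.

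More importantly, the decisive mechanism for strictness is missing. Hypothesis \eqref{sscv} is only \emph{nonnegativity} of the second variation, so your two estimates can at best cancel to $E(\omega)-E(\bar\omega)\le 0$; no analysis of the saturation cases of Cauchy--Schwarz will extract strict, uniform negativity from \eqref{sscv} alone, because the quadratic form may have a nontrivial null direction. What the paper actually establishes, and what Lemma \ref{lem1} requires, is the strengthened coercivity \eqref{deta}, in which a rank-one correction $\bigl(\int_D g'(\mathcal G\bar\omega)\mathcal G\phi\,dx\bigr)^2/\int_D g'(\mathcal G\bar\omega)\,dx$ is added to the second variation; this term is generated by minimizing $\mathcal D_\lambda$ over the multiplier $\lambda$ attached to the conserved total vorticity $\int_D\omega\,dx$ on $\mathcal R_{\bar\omega}$. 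Strict coercivity then follows from a compactness argument together with Lemma \ref{fefu} (this is where the $C^{1,\alpha}$ hypothesis in (a) is used): any null direction of $-\Delta-g'(\mathcal G\bar\omega)$ is a first eigenfunction, hence of constant sign, hence cannot satisfy $\int_D g'(\mathcal G\bar\omega)\tilde u\,dx=0$ when $\int_D g'(\mathcal G\bar\omega)\,dx>0$; the degenerate case $\int_D g'(\mathcal G\bar\omega)\,dx=0$ forces $\bar\omega$ to be constant and is disposed of separately. Deferring all of this to ``the improved Wolansky--Ghil-type lemma advertised in the abstract'' leaves the actual content of the theorem unproved.
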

   \begin{remark}\label{r100}
   By elliptic regularity theory, the $\bar\omega$ in Theorem \ref{thm1} actually belongs to $C^1(\bar D)$, thus is a classical solution to the steady vorticity equation \eqref{sso}.
 \end{remark}
 
 \begin{remark}
By Burton and McLeod \cite{BM}, if $\bar\omega$ is a local maximizer of $E$ on $\mathcal R_{\bar\omega},$ then there exists some nondecreasing function $g$ such that $\bar\omega=g(\mathcal G\bar\omega)$ in $D$. Therefore, to prove the nonlinear stability of a steady solution $\bar\omega$ satisfying $\bar\omega=g(\mathcal G\bar\omega)$  in Burton's setting,  it is \emph{necessary} to require  $g$ to be nondecreasing.  \end{remark}

   \begin{remark}\label{r100}
Compared with Lemma \ref{lem1} below, we require a stronger condition on $g$, that is, $g\in C^{1,\alpha}[m,M]$ for some $\alpha\in(0,1)$. This condition is imposed to guarantee $g'(\mathcal G\bar\omega)\in C^{\alpha}(\bar D)$ so that Lemma \ref{fefu} can be applied. Lemma \ref{fefu} is  essential in the proof of Theorem \ref{thm1}.

\end{remark}

   \begin{remark}\label{r1012}
There exist steady Euler flows that are stable but do not satisfy \eqref{sscv}. A typical example is the flow related to the following Lane-Emden equation
\begin{equation}\label{leeq}
\begin{cases}
-\Delta \psi=\psi^p,&x\in D,\\
\psi>0,&x\in D,\\
\psi=0,&x\in\partial D,
\end{cases}
\end{equation}
where $p\in(1,+\infty).$ Existence of a solution to \eqref{leeq} can be proved by applying the Nehari manifold method. It is clear that any solution $\psi$ of \eqref{leeq} satisfies 
\[\int_D|\nabla \psi|^2 dx-p\int_D\psi^{p+1}dx=(1-p)\int_D\psi^{p+1}dx<0,\]
hence there is no semistable solution to \eqref{leeq}. However, it has been proved in Theorem 1.11 of \cite{WGLA} that when $D$ is convex the flow related to any least energy solution of \eqref{leeq} (which always exists) must be stable. 

\end{remark}

Now we compare Theorem \ref{thm1} with several closely related stability results in the literature.
In the 1960s, Arnol'd \cite{A1,A2} proposed what is now usually called the energy-Casimir method and used it to prove the famous Arnol'd's first and second stability theorems for 2D steady Euler flows. Arnol'd's second stability theorem asserts that if there exist two sufficiently small positive numbers $c_1, c_2$ such that
\begin{equation}\label{tstrs}
0< c_1\leq g'(\mathcal G\bar\omega)\leq c_2 \quad \mbox{ in } D,
\end{equation}
 then the flow is nonlinearly stable in the enstrophy norm, i.e., the $L^2$ norm of the vorticity. However, since the condition \eqref{tstrs} is too strong, the application of  Arnol'd's second stability theorem is very limited . As we will see in Section 3, Arnol'd's result can only be used to prove nonlinear stability of flows with \emph{global} maximum kinetic energy on some rearrangement class. In 1990s, by introducing the method of supporting functionals, Wolansky and Ghil \cite{WG0} showed that 
the positive definiteness of the operator $-\Delta-g'(\mathcal G\bar\omega)$ in $L^2(D)$ (or equivalently, the first eigenvalue of $-\Delta-g'(\mathcal G\bar\omega)$ in $L^2(D)$ is positive) is sufficient to ensure nonlinear stability in the enstrophy norm. In Section 3, we will see that the steady flows studied in \cite{WG0} in fact have \emph{local} maximum kinetic energy on some rearrangement class. Shortly later, Wolansky and Ghil \cite{WG} refined the supporting functional method in  \cite{WG0} so that it can be used to tackle a larger class of steady flows, allowing $-\Delta-g'(\mathcal G\bar\omega)$ to have a finite number of negative eigenvalues in $L^2(D)$; however, to get nonlinear stability they need to impose some
algebraic conditions on the operator $-\Delta-g'(\mathcal G\bar\omega)$ (see Theorem 4.1 in \cite{WG}), which in most cases are not easy to verify.  

Note that both Arnol'd's second stability theorem and and Wolansky and Ghil's results in \cite{WG0,WG} require $g'$ to have a positive lower bound. This is a strong restriction and excludes many physically interesting steady flows.  
For example, if in \eqref{gvvv} the vorticity $\bar\omega$ has compact  support, then $g$ cannot be a strictly increasing function, thus $g'$ must vanish on some interval. Two-dimensional steady Euler flows with compactly supported vorticity are very common and related existence results can be found in \cite{CLW,CPY1,CPY2,CWCV,CWS,EM,SV,T} and the references therein. Of course, whether those flows satisfy the conditions in Theorem \ref{thm1} still requires careful consideration.

Compared with Arnol'd's and Wolansky and Ghil's stability results, in Theorem \ref{thm1} we only require the first eigenvalue of $-\Delta-g'(\mathcal G\bar\omega)$ in $L^2(D)$ to be nonnegative, without any other assumption. Moreover, $g$ is only assumed to be nondecreasing, rather than the stronger condition ``$g'$ has a positive lower bound". Finally, since we make use of Burton's stability criterion (i.e., Theorem \ref{bcr}), our proof is more concise and also the stability obtained is stronger (it is in in the $L^p$ norm of the vorticity for any $p\in(1,+\infty),$ not only in the enstrophy norm).

The following lemma is crucial in the proof of Theorem \ref{thm1}. To make the statement more concise, 
for $\bar\omega\in L^\infty(D)$, denote
\[\mathcal R_{\bar\omega}-\bar\omega=\{\phi\mid \phi=\omega-\bar\omega \mbox{ for some }\omega\in \mathcal R_{\bar\omega}\}.\]
It is clear that $\mathcal R_{\bar\omega}-\bar\omega$ is a bounded subset of $L^\infty(D).$

 \begin{lemma}\label{lem1}
 Let $\bar\omega\in L^\infty(D), \bar\omega\neq 0$ be a steady solution satisfying $\bar\omega=g(\mathcal G\bar\omega)$ in $D$, where $g$ satisfies 
 \begin{itemize}
    \item [(i)] $g\in C^{1}[m,M]$, where $m=\min_{\bar D} \mathcal G\bar\omega$ and $M=\max_{\bar D} \mathcal G\bar\omega$,
    \item [(ii)] $g$ is nondecreasing on $[m,M]$.
\end{itemize}
 Suppose $\int_Dg'(\mathcal G\bar\omega)dx>0$ and there exists some $\delta>0$ such that
\begin{equation}\label{deta}\int_D\phi\mathcal G\phi dx-\int_Dg'(\mathcal G\bar\omega)(\mathcal G\phi)^2 dx+\frac{\left(\int_Dg'(\mathcal G\bar\omega)\mathcal G\phi dx\right)^2}{\int_Dg'(\mathcal G\bar\omega)dx}\geq \delta \int_D(\mathcal G\phi)^2 dx ,\,\forall \phi\in   \mathcal R_{\bar\omega}-\bar\omega.
\end{equation}
Then  $\bar\omega$ is an isolated local maximizer of $E$ on $\mathcal R_{\bar\omega}$, thus is nonlinearly stable in the $L^p$ norm of the vorticity with respect to initial perturbations in $L^\infty(D)$.
 \end{lemma}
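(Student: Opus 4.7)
The goal is to show that $\bar\omega$ is an isolated local maximizer of $E$ on $\mathcal R_{\bar\omega}$, from which the claimed $L^p$ nonlinear stability follows immediately from Burton's criterion (Theorem~\ref{bcr}). Setting $\phi=\omega-\bar\omega$ and $\psi=\mathcal G\omega=\bar\psi+\mathcal G\phi$, I start from the exact identity
\[
2\bigl(E(\omega)-E(\bar\omega)\bigr)=\int_D(\omega\psi-\bar\omega\bar\psi)\,dx=2\int_D\phi\bar\psi\,dx+\int_D\phi\mathcal G\phi\,dx,
\]
and aim to bound the right-hand side above by $-\delta\int_D(\mathcal G\phi)^2\,dx$ modulo a remainder of strictly lower order as $\omega\to\bar\omega$ in $L^1$.

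The central step is a Young--Fenchel inequality equipped with a free scalar parameter $c$ that will eventually be tuned to match the correction appearing in \eqref{deta}. Let $G\in C^2$ be a convex primitive of $g$, smoothly extended past $[m,M]$, and let $G^*$ be its Legendre conjugate. For every constant $c\in\mathbb R$ and a.e. $x$,
\[
\omega(x)\bigl(\psi(x)-c\bigr)\le G\bigl(\psi(x)-c\bigr)+G^*(\omega(x)),
\]
with equality at $(\omega,\psi,c)=(\bar\omega,\bar\psi,0)$ because $\bar\omega=g(\bar\psi)$. Integrating and using that $\omega\in\mathcal R_{\bar\omega}$ preserves $\int G^*(\omega)\,dx$ and $\int\omega\,dx$, then subtracting the baseline Young equality at $c=0$, I obtain
\[
2\bigl(E(\omega)-E(\bar\omega)\bigr)\le c\int_D\bar\omega\,dx+\int_D\bigl[G(\bar\psi+\mathcal G\phi-c)-G(\bar\psi)\bigr]\,dx.
\]
Taylor-expanding $G$ to second order in the increment $\mathcal G\phi-c$, using $g(\bar\psi)=\bar\omega$ and self-adjointness of $\mathcal G$ (so that $\int\bar\omega\,\mathcal G\phi\,dx=\int\phi\bar\psi\,dx$ and the $c\int_D\bar\omega$ terms cancel), then combining with the starting identity yields
\[
E(\omega)-E(\bar\omega)\le \tfrac12\int_D g'(\xi_x)\bigl(\mathcal G\phi-c\bigr)^2\,dx-\tfrac12\int_D\phi\mathcal G\phi\,dx,
\]
with $\xi_x$ lying between $\bar\psi(x)$ and $\bar\psi(x)+\mathcal G\phi(x)-c$.

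Since the estimate is valid for every $c$, I choose
\[
c^*=\frac{\int_D g'(\bar\psi)\mathcal G\phi\,dx}{\int_D g'(\bar\psi)\,dx},
\]
the weighted mean that minimizes $\int_D g'(\bar\psi)(\mathcal G\phi-c)^2\,dx$. After this choice, the leading quadratic contribution equals exactly
\[
\int_D g'(\bar\psi)(\mathcal G\phi)^2\,dx-\frac{\bigl(\int_D g'(\bar\psi)\mathcal G\phi\,dx\bigr)^2}{\int_D g'(\bar\psi)\,dx}-\int_D\phi\mathcal G\phi\,dx,
\]
which is bounded above by $-\delta\int_D(\mathcal G\phi)^2\,dx$ thanks to \eqref{deta}. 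The residual $\tfrac12\int_D[g'(\xi_x)-g'(\bar\psi)](\mathcal G\phi-c^*)^2\,dx$ is handled as follows: since $\mathcal R_{\bar\omega}-\bar\omega$ is bounded in $L^\infty(D)$, standard elliptic regularity plus interpolation give $\|\mathcal G\phi\|_{L^\infty}\to 0$ as $\|\phi\|_{L^1}\to 0$; combined with the Cauchy--Schwarz bound $|c^*|\le C\|\mathcal G\phi\|_{L^2}$, the argument $\xi_x$ lies uniformly close to $\bar\psi(x)$, so uniform continuity of $g'$ on $[m,M]$ makes the residual bounded by $C\varepsilon\int_D(\mathcal G\phi)^2\,dx$ for arbitrarily small $\varepsilon>0$. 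Absorbing this yields $E(\omega)-E(\bar\omega)\le -\tfrac{\delta}{4}\int_D(\mathcal G\phi)^2\,dx<0$ whenever $\phi\ne0$ (the injectivity of $\mathcal G$ ensures $\mathcal G\phi\ne0$), which establishes the isolated local maximizer property. The main conceptual obstacle lies in the introduction of the Lagrange multiplier $c$ and its optimal choice $c^*$: without it, a plain Young--Fenchel estimate leaves no room to absorb the correction term in \eqref{deta}; once this trick is in place, the remaining estimates are routine.
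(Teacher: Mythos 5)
Your proof is correct, and it follows the same underlying strategy as the paper: an energy--Casimir functional bounded above by a one-parameter family of Young--Fenchel ``supporting'' functionals $\mathcal D_c(\omega)=-\tfrac12\int\omega\mathcal G\omega+\int G(\mathcal G\omega-c)+cM_0$, where the scalar shift $c$ is what generates the rank-one correction $\bigl(\int g'(\mathcal G\bar\omega)\mathcal G\phi\,dx\bigr)^2/\int g'(\mathcal G\bar\omega)\,dx$ in \eqref{deta}. The genuine difference is how the parameter is selected. The paper defines $\hat{\mathcal D}(\omega)=\min_{\lambda}\mathcal D_\lambda(\omega)$ and must then (a) prove the minimizer for $\bar\omega$ itself is $\lambda=0$ (Lemma \ref{ba67}, which is where the hypothesis $\bar\omega\neq 0$ and the strict monotonicity of the extension enter), (b) show along the contradiction sequence that $\bar\lambda_n\to 0$, and (c) derive the asymptotic identification \eqref{zzz}, $\bar\lambda_n=\int g'(\mathcal G\bar\omega)\mathcal G\phi_n\,dx/\int g'(\mathcal G\bar\omega)\,dx+o(1)\int|\mathcal G\phi_n|\,dx$, via the stationarity equation and the mean value theorem. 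You shortcut all of this by observing that \emph{every} $c$ gives a valid upper bound, and simply substituting the explicit weighted mean $c^*$ --- which is exactly the leading term of the paper's $\bar\lambda_n$. This eliminates Lemmas \ref{epex} and \ref{ba67} and the expansion \eqref{zzz}, and it yields the quantitative bound $E(\omega)-E(\bar\omega)\le-\tfrac{\delta}{4}\int_D(\mathcal G\phi)^2dx$ directly rather than by contradiction. What the paper's formulation buys in exchange is the canonical supporting-functional structure of Wolansky and Ghil (Theorem \ref{wg2}), where taking the minimum over the parameter family is the natural object; for the purposes of Lemma \ref{lem1} alone, your version is a clean simplification. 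All the supporting estimates you invoke ($\|\mathcal G\phi\|_{L^\infty}\to 0$ as $\|\phi\|_{L^1}\to 0$ on the $L^\infty$-bounded set $\mathcal R_{\bar\omega}-\bar\omega$, the bound $|D|(c^*)^2\le C\int_D(\mathcal G\phi)^2dx$ needed to absorb the remainder, and the injectivity of $\mathcal G$) are valid and are used in essentially the same form in the paper.
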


\begin{remark}
Compared with Theorem \ref{thm1}, we require \eqref{deta} to hold only for any $\phi\in   \mathcal R_{\bar\omega}-\bar\omega$, rather than $\phi\in   L^2(D)$. 
However, this difference does not make a difference in the proof of Theorem \ref{thm1} (as we will see in Section 5,   the assumptions of Theorem \ref{thm1} ensure \eqref{deta} to hold for any $\phi\in L^2(D)$). We believe that a proper use of this difference can weaken the assumptions of  Theorem \ref{thm1}, or inspire a new proof without using Lemma \ref{lem1}  just as we do in proving Arnol'd's second stability in Section 3. Of course, this may require some deep analysis of the set $\mathcal R_{\bar\omega}-\bar\omega.$ 
\end{remark}
This paper is organized as follows. In Section 2, we give some preliminaries that are used in subsequent sections. In Section 3, we introduce the energy-Casimir method and use it to simplify the proofs of
 Arnol'd's stability theorems and Wolansky and Ghil's stability theorem based on Burton's stability criterion.  In Sections 4 and 5, we prove Lemma \ref{lem1} and Theorem \ref{thm1} respectively.

\section{Preliminaries}
We prove several lemmas in this section which will be used in Sections 3 and 4.

The first lemma is about some elementary properties of strictly decreasing functions, which is mainly used in the proof of Arnol'd's first stability theorem in Section 3.

\begin{lemma}\label{fuzhu0}
Let $q:\mathbb R\to\mathbb R$ be strictly decreasing, satisfying
 \begin{equation}\label{fz11}
\lim_{s\to+\infty}q(s)=-\infty,\quad \lim_{s\to-\infty}q(s)=+\infty.\end{equation} 
 Define $p(s)=\inf\{\tau\mid q(\tau)\leq s\}.$
Then 
\begin{itemize}
\item [(i)] $p$ is nonincreasing, thus has at most countably many discontinuities;
\item [(ii)] $p(q(s))=s, \forall\,s\in\mathbb R$;
\item [(iii)] define $P(s)=\int_0^sp(\tau)d\tau,$ then $P$ is locally Lipschitz continuous and $P'(s)=p(s)$ for a.e. $s\in\mathbb R$;
\item [(iv)] for any $r,s\in\mathbb R$, it holds that
\[P(r+s)\leq P(r)+p(r)s.\]
\end{itemize}
\end{lemma}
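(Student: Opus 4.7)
The plan is to prove the four items sequentially, since each is a short, self-contained real-analysis statement that follows by unpacking the definition of $p$ together with the hypotheses on $q$. The coercivity conditions $\lim_{s \to +\infty} q(s) = -\infty$ and $\lim_{s \to -\infty} q(s) = +\infty$ guarantee that for every $s \in \mathbb{R}$ the sublevel set $\{\tau : q(\tau) \leq s\}$ is both nonempty and bounded below, so $p$ is a well-defined real-valued function on $\mathbb{R}$.

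For (i), I would observe that if $s_1 < s_2$ then $\{\tau : q(\tau) \leq s_1\} \subseteq \{\tau : q(\tau) \leq s_2\}$, and passing to infima reverses the inclusion, giving $p(s_2) \leq p(s_1)$. The countable-discontinuity claim is then the standard fact that a monotone function on an interval has only jump discontinuities, each jump interval containing a distinct rational. For (ii), I would use strict monotonicity of $q$: for any fixed $s$, the inequality $q(\tau) \leq q(s)$ is equivalent to $\tau \geq s$, so $\{\tau : q(\tau) \leq q(s)\} = [s, +\infty)$, whose infimum is $s$.

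For (iii), monotonicity from (i) shows $p$ is bounded on every compact interval $[a,b]$ by some $M_{[a,b]}$; hence for $s_1, s_2 \in [a,b]$ one has $|P(s_2) - P(s_1)| \leq M_{[a,b]} |s_2 - s_1|$, which is local Lipschitz continuity. The identity $P'(s) = p(s)$ almost everywhere then follows from the Lebesgue differentiation theorem applied to the locally bounded measurable integrand $p$; alternatively, by (i), $p$ is continuous off a countable set and the fundamental theorem of calculus applies at each continuity point. For (iv), I would write $P(r+s) - P(r) = \int_r^{r+s} p(\tau) d\tau$ and split on the sign of $s$: when $s > 0$, monotonicity gives $p(\tau) \leq p(r)$ on $[r, r+s]$, so the integral is at most $p(r) s$; when $s < 0$, monotonicity gives $p(\tau) \geq p(r)$ on $[r+s, r]$, so $\int_r^{r+s} p(\tau) d\tau = -\int_{r+s}^{r} p(\tau) d\tau \leq -p(r)(-s) = p(r) s$; the case $s = 0$ is immediate.

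There is no serious obstacle here, as the statement is a routine preliminary and each part reduces to manipulating a single inequality. The only point deserving care is that strict monotonicity of $q$ (not merely nonincrease) is precisely what forces $\{\tau : q(\tau) \leq q(s)\}$ to equal $[s, +\infty)$ in (ii), and thereby yields the equality $p(q(s)) = s$ rather than the one-sided bound $p(q(s)) \leq s$ that a generalized inverse construction would give in general.
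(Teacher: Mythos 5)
Your proposal is correct and follows essentially the same route as the paper: the inclusion of sublevel sets for (i), the identification $\{\tau : q(\tau)\le q(s)\}=[s,+\infty)$ via strict monotonicity for (ii), local boundedness of $p$ plus differentiation at continuity points for (iii), and the representation $P(r+s)-P(r)-p(r)s=\int_r^{r+s}\bigl(p(\tau)-p(r)\bigr)\,d\tau\le 0$ for (iv). No gaps.
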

\begin{proof}
To prove (i), just observe that
\[\{\tau\mid q(\tau)\leq s_1\}\subset \{\tau\mid q(\tau)\leq s_2\},\quad\forall\,s_1<s_2.\]

Next we prove (ii). Let $s\in \mathbb R$ be fixed. By the definition of $p$, 
\[p(q(s))=\inf\{\tau\in\mathbb R\mid q(\tau)\leq q(s)\}.\]
On the other hand, since $q$ is strictly decreasing, it is easy to check that 
\[\{\tau\in\mathbb R\mid q(\tau)\leq q(s)\}=[s,+\infty).\]
Hence the desired result follows immediately.

Now we prove (iii). First observe that $p\in L^\infty_{\rm loc}(\mathbb R)$, hence $P$ is locally Lipschitz continuous. Choose $s_0\in\mathbb R$ such that $p$ is continuous at $s_0.$ It is clear that as $s\to s_0$ 
\[\left|\frac{P(s)-P(s_0)}{s-s_0}-p(s_0)\right|\leq\frac{1}{s-s_0}\int_{s_0}^{s}|p(\tau)-p(s_0)| d\tau\to 0.\]
This means that $P'=p$ on the set $\{s\in\mathbb R\mid p \mbox{ is continuous at } s\}.$ Taking into account (i) we have the desired result.

Finally we prove (iv). By a simple calculation we have
\[P(r+s)-P(r)-p(r)s=\int_r^{r+s}p(\tau)-p(r)d\tau\leq 0.\]
Here we used the fact that $p$ is nonincreasing.

\end{proof}

The next lemma is about nondecreasing and continuous functions. To make the statement concise, for any function $q:\mathbb R\to\mathbb R$ we use $\mathcal L_q(s)$ to denote the $s$-level set of $q$, i.e.,
\[\mathcal L_q(s)=\{\tau\in\mathbb R\mid q(\tau)= s\}.\]

\begin{lemma}\label{fuzhu}
Let $q:\mathbb R\to\mathbb R$ be continuous and nondecreasing, satisfying \begin{equation}\label{fz11}
\lim_{s\to+\infty}q(s)=+\infty,\quad \lim_{s\to-\infty}q(s)=-\infty.\end{equation} 
 Define $p(s)=\inf\{\tau\mid \tau\in \mathcal L_q(s)\}.$
Then 
\begin{itemize}
\item [(i)] $q(p(s))=s, \forall\,s\in\mathbb R$;
\item [(ii)] $p(q(s))=s$ whenever $\mathcal L_q(q(s))$ is a singleton;
\item [(iii)] $p$ is strictly increasing, thus has at most countably many discontinuities;
\item[(iv)] $p$ is continuous from the left.
\item[(v)] Define $P(s)=\int_0^sp(\tau)d\tau,$ then $P$ is locally Lipschitz continuous and $P'(s)=p(s)$ for a.e. $s\in\mathbb R$;
\end{itemize}
\end{lemma}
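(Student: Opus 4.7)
My plan is to treat the five items in order, leaning throughout on the observation that, under the continuity and limit hypotheses on $q$, every level set $\mathcal L_q(s)$ is a non-empty closed interval with attained left endpoint. Items (i) and (ii) then reduce to a direct verification from the definition of $p$: the intermediate value theorem together with \eqref{fz11} makes $\mathcal L_q(s)$ non-empty, while closedness of $\mathcal L_q(s)$ (as the preimage of a point under the continuous $q$) places $p(s)$ in $\mathcal L_q(s)$, giving $q(p(s))=s$; item (ii) is then immediate because the singleton $\mathcal L_q(q(s))$ must contain $s$, so $p(q(s))=s$.

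For (iii), I would argue by contradiction: if $s_1<s_2$ but $p(s_1)\geq p(s_2)$, then applying the nondecreasing map $q$ and invoking (i) produces $s_1\geq s_2$, a contradiction. With strict monotonicity in hand, (iv) follows by taking any sequence $t_n\uparrow s$ with $t_n<s$, noting that $p(t_n)$ is increasing and bounded above by $p(s)$, letting $\tau^\ast$ denote its limit, and using continuity of $q$ and part (i) to compute $q(\tau^\ast)=\lim_n q(p(t_n))=\lim_n t_n=s$; this puts $\tau^\ast$ in $\mathcal L_q(s)$, so $\tau^\ast\geq p(s)$, and combined with $\tau^\ast\leq p(s)$ gives equality.

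For (v), the strict monotonicity from (iii) makes $p$ locally bounded and countably discontinuous, so $P$ is automatically locally Lipschitz. At any point $s_0$ where $p$ is continuous, the standard estimate $\bigl|\tfrac{P(s)-P(s_0)}{s-s_0}-p(s_0)\bigr|\leq\tfrac{1}{|s-s_0|}\bigl|\int_{s_0}^{s}|p(\tau)-p(s_0)|\,d\tau\bigr|\to 0$ as $s\to s_0$ yields $P'(s_0)=p(s_0)$, exactly as in the corresponding step of Lemma \ref{fuzhu0}(iii), and this gives $P'=p$ almost everywhere.

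The main technical point, if any, is the left continuity in (iv): it requires carefully matching the asymmetry built into the definition $p(s)=\inf\mathcal L_q(s)$ with the one-sided approach $t_n\uparrow s$. Indeed, $p$ need not be right continuous where $\mathcal L_q(s)$ is a nondegenerate interval, so the one-sided conclusion is sharp. Everything else is a direct unwinding of the definitions and the monotonicity of $q$.
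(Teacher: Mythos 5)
Your proposal is correct and follows essentially the same route as the paper: compactness of the level sets gives (i) and (ii), the contradiction via monotonicity of $q$ gives (iii), and the locally-Lipschitz-plus-continuity-points argument gives (v). Your direct limit argument for (iv) (showing the left limit $\tau^\ast$ satisfies $q(\tau^\ast)=s$ and hence $\tau^\ast\geq p(s)$) is just the contrapositive form of the paper's contradiction argument, and in fact sidesteps a small slip in the paper, which invokes strict monotonicity of $q$ where only the definition of $p(s)$ as the infimum of the level set is needed.
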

\begin{proof}
First we prove (i). Let $s$ be fixed. It is clear from the assumptions on $q$ that  $\mathcal L_q(s)$ is a compact interval, which implies that there exists a unique $\tau_0\in\mathcal L_q(s)$ such that $p(s)=\tau_0$. Thus $q(p(s))=q(\tau_0)=s.$

Next we prove (ii). Obviously $s\in\mathcal L_q(q(s))$. On the other hand, by (i) we have
$q(p(q(s)))=q(s)$, thus $p(q(s))\in\mathcal L_q(q(s))$. The conclusion follows from the assumption that $\mathcal L_q(q(s))$ is a singleton.

To prove (iii), suppose by contradiction there exist $s_1,s_2\in\mathbb R,$ $s_1<s_2$, such that $p(s_1)\geq p(s_2).$ Then obviously $q(p(s_1))\geq q(p(s_2))$, thus by (i) we have $s_1\geq s_2$, which is an obvious contradiction.

Now we prove (iv). Let $s_0$ be fixed. By (ii) we see that the limit $\lim_{s\to s_0^-}p(s)$ exists and $\lim_{s\to s_0^-}p(s)\leq p(s_0)$. Suppose by contradiction that $\lim_{s\to s_0^-}p(s)\leq p(s_0)-\varepsilon_0$ for some $\varepsilon_0>0$, then $\lim_{s\to s_0^-}q(p(s))\leq  q(p(s_0)-\varepsilon_0),$ i.e., $s_0\leq  q(p(s_0)-\varepsilon_0).$ Taking into account the fact that $q$ is strictly increasing, we get the following contradiction $$s_0\leq q(p(s_0)-\varepsilon_0)<q(p(s_0))=s_0.$$

Finally the proof of (v) is identical to that of (iii) in Lemma \ref{fuzhu0}.

\end{proof}

We also need the notion of Legendre transform of a convex function. We summarize what are needed later in the following lemma. The conditions imposed are not optimal, but are sufficient for our use.

\begin{lemma}\label{lt}
Suppose $q:\mathbb R\to\mathbb R$ satisfies
\begin{itemize}
\item[(1)] $q\in C^1(\mathbb R)$;
\item [(2)] $q$ is nondecreasing;
\item[(3)] there exists positive constants $c_1,c_2$ such that \[\lim_{s\to+\infty}\frac{q(s)}{s}=c_1,\quad \lim_{s\to-\infty}\frac{q(s)}{s}=c_2.\]
\end{itemize}
Denote $Q(s)=\int_0^sq(\tau)d\tau$. Define the Legendre transform $\hat Q:\mathbb R\to(-\infty,+\infty]$ of $Q$ as follows
\begin{equation}\label{dolt}
\hat Q(s)=\sup_{\tau\in\mathbb R}(\tau s-Q(\tau)).
\end{equation}
Let $p,P$ be defined in Lemma \ref{fuzhu}. Then
\begin{itemize}
\item[(i)] For each fixed $s\in\mathbb R,$ $s\tau-Q(\tau)$ is a constant on $\mathcal L_q(s),$ and $\hat Q(s)=s\tau-Q(\tau)\big|_{\tau\in \mathcal L_q(s)}$;
\item [(ii)] $\hat Q(s)+Q(\tau)\geq s\tau$ for any $s,\tau\in\mathbb R,$ and the equality holds if and only if $q(\tau)=s;$
\item [(iii)] $\hat Q(s)=P(s)+\hat Q(0), \forall\,s\in\mathbb R$ (hence $\hat Q$ is locally Lipschitz continuous);
\item [(iv)] If $q$ is additionally strictly increasing, then $\hat Q\in C^1(\mathbb R)$ and $\hat Q'=p$. 
\end{itemize}
\end{lemma}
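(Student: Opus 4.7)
The plan is to exploit the convexity of $Q$ (which follows from $q$ being nondecreasing) to extract $\hat Q$. For (i) I look at $f(\tau) = s\tau - Q(\tau)$, which is concave with $f'(\tau) = s - q(\tau)$. The growth condition (3) gives $Q(\tau)/\tau^2 \to c_i/2$ as $\tau \to \pm\infty$, hence $f(\tau) \to -\infty$ and the supremum defining $\hat Q(s)$ is attained. The critical set of $f$ is exactly $\mathcal L_q(s)$, and since $f$ is concave every critical point is a global maximizer, so $f$ is constant on the interval $\mathcal L_q(s)$. Part (ii) is then immediate: the inequality is tautologous from the definition of $\hat Q$, and equality forces $\tau$ to be a maximizer of $s\tau' - Q(\tau')$, i.e., $q(\tau) = s$.

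The heart of the lemma is (iii). By (i) and Lemma \ref{fuzhu}(i) one may take $\tau = p(s) \in \mathcal L_q(s)$, giving $\hat Q(s) = sp(s) - Q(p(s))$ and in particular $\hat Q(0) = -Q(p(0))$. Hence (iii) reduces to the integration-by-parts identity
\[
\int_0^s p(\sigma)\,d\sigma \;=\; sp(s) - \bigl(Q(p(s)) - Q(p(0))\bigr),
\]
which I would prove by a Fubini/area argument. Assuming $s \ge 0$, write $Q(p(s)) - Q(p(0)) = \int_{p(0)}^{p(s)} q(t)\,dt$ as the area of the region
\[
A = \{(t,\sigma) \,:\, p(0) \le t \le p(s),\ 0 \le \sigma \le q(t)\}.
\]
Using that $q$ is continuous and nondecreasing with $p(\sigma) = \inf\{t : q(t) = \sigma\}$, one checks that $\{t : q(t) \ge \sigma\} = [p(\sigma),\infty)$, so $A = \{(t,\sigma) : 0 \le \sigma \le s,\ p(\sigma) \le t \le p(s)\}$. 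Swapping the order of integration yields $\int_0^s (p(s) - p(\sigma))\,d\sigma = sp(s) - P(s)$, which gives the identity. The case $s < 0$ is symmetric (integrate over $p(s) \le t \le p(0)$ and $s \le \sigma \le 0$). Local Lipschitz continuity of $\hat Q$ then follows from Lemma \ref{fuzhu}(v).

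For (iv), if $q$ is additionally strictly increasing then $p = q^{-1}$ is a continuous strictly increasing bijection of $\mathbb R$, so $P \in C^1(\mathbb R)$ with $P' = p$ everywhere, and by (iii) the same holds for $\hat Q$.

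The main technical obstacle is the identity in (iii): one must handle jumps of $p$ caused by flat pieces of $q$. The Fubini/area argument is the cleanest route because it is insensitive to such jumps, whereas differentiating $\hat Q - P$ directly would require careful a.e.\ bookkeeping at the jump points of $p$.
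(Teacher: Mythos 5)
Your proposal is correct, and parts (i), (ii), (iv) follow essentially the same route as the paper (computing the derivative of $\tau\mapsto s\tau-Q(\tau)$ on the level set and using that the supremum is attained at a critical point). The genuine divergence is in (iii), which is indeed the heart of the lemma. The paper also reduces to the identity $sp(s)-Q(p(s))=\int_0^sp(\tau)d\tau-Q(p(0))$, but then reparametrizes $s=q(t)$, forms $h(t)=q(t)t-Q(t)-\int_0^{q(t)}p(\tau)d\tau+Q(p(0))$, checks $h(p(0))=0$, and shows $h'(t)=q'(t)\bigl(t-p(q(t))\bigr)=0$ a.e.\ by splitting into the set where $q'=0$ and the set where $q'>0$ (on the latter $\mathcal L_q(q(t))$ is a singleton, so $p(q(t))=t$ by Lemma \ref{fuzhu}(ii)). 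Your Fubini/area-swap argument replaces this differentiation entirely and, as you say, is insensitive to the jumps of $p$; it also avoids the reparametrization step, at the cost of a short set-theoretic verification that the region $A$ has the two descriptions you give. One small point to record when you write up the case $s<0$: there the relevant horizontal slices are $\{t: q(t)\le\sigma\}=(-\infty,\sup\mathcal L_q(\sigma)]$, and $\sup\mathcal L_q(\sigma)$ coincides with $p(\sigma)$ only for all but countably many $\sigma$ (the $\sigma$ whose level sets are nondegenerate intervals); this is harmless for the integral but should be said, since your identity $\{t:q(t)\ge\sigma\}=[p(\sigma),\infty)$, which is exact, does not literally dualize. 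Both approaches are comparably short; the paper's keeps everything in one variable and leans on Lemma \ref{fuzhu}(ii), while yours is more geometric and arguably more transparent about why flat pieces of $q$ cause no trouble.
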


\begin{proof}
First we prove (i). Let $s$ be fixed. By (ii) and (iii), $\mathcal Q_s$ is a compact interval. Computing the derivative of $s\tau-Q(\tau)$ on $\mathcal L_q(s)$ we get
\[(s\tau-Q(\tau))'\big|_{\tau\in\mathcal L_q(s)}=s-q(\tau)\big|_{\tau\in\mathcal L_q(s)}=0,\] 
hence $s\tau-Q(\tau)$ is a constant on $\mathcal Q_s$. To prove $\hat Q(s)=(s\tau-Q(\tau))\big|_{\tau\in \mathcal L_q(s)}$, we choose some $\tau_0$ such that $\hat Q(s)=s\tau_0-Q(\tau_0)$. This is doable by (iii) and the continuity of $\hat Q.$ Obviously $\frac{d}{dr}(sr-Q(r))'\big|_{r=\tau_0}=0$, which yields $\tau_0\in\mathcal L_q(s).$ Hence \[\hat Q(s)=s\tau_0-Q(\tau_0)=(s\tau-Q(\tau))\big|_{\tau\in \mathcal L_q(s)}.\]

Next we prove (ii). The first part of (ii) follows from the definition of $\hat Q.$ Now we prove the second part. If $q(\tau)=s,$ by (i) we see that $\hat Q(s)=s\tau-Q(\tau)$. Conversely, if $\hat Q(s)=s\tau-Q(\tau)$, then $\frac{d}{dr}(sr-Q(r))\big|_{r=\tau}=0$, which gives $s=q(\tau).$

Now we prove (iii). Since $p(s)\in\mathcal L_q(s), \forall\,s\in\mathbb R$  (by (i) in Lemma \ref{fuzhu}), it is obvious that $\hat Q(s)=sp(s)-Q(p(s))$. Thus it suffices to show that
\[sp(s)-Q(p(s))=\int_0^sp(\tau)d\tau-Q(p(0)),\,\,\forall\,s\in\mathbb R.\]
By (iii) and the continuity of $q$, we see that  $\{q(t)\mid t\in\mathbb R\}=\mathbb R,$ thus we need only to prove
\begin{equation}\label{notp0}
q(t)p(q(t))-Q(p(q(t)))=\int_0^{q(t)}p(\tau)d\tau-Q(p(0)),\,\,\forall\,t\in \mathbb R,
\end{equation}
Using the fact that $p(q(t))$ and $t$ both belong to $\mathcal L_q(q(t))$, we have 
\[q(t)p(q(t))-Q(p(q(t)))=q(t)t-Q(t),\]
therefore \eqref{notp0} is in fact equivalent to
\begin{equation}\label{notp1}
q(t)t-Q(t)=\int_0^{q(t)}p(\tau)d\tau-Q(p(0)),\,\,\forall\,t\in \mathbb R.
\end{equation}
Denote 
\[h(t)=q(t)t-Q(t)-\int_0^{q(t)}p(\tau)d\tau+Q(p(0)).\]
 We need to prove that $h(t)=0,\forall\,t\in\mathbb R$. Observe first that $h(p(0))=0$, thus to finish the proof it suffices to show that $h'(t)=0 $ a.e. $t\in\mathbb R$ (obviously $h$ is locally Lipschitz continuous). By a simple calculation we have
\[h'(t)=q'(t)t-q'(t)p(q(t)) \mbox{ a.e. }t\in\mathbb R.\]
It is clear that $h'=0$ a.e. on the set $\{t\in\mathbb R\mid q'(t)=0\}$. On the other hand, for any $t\in\mathbb R$ such that $ q'(t)>0$, it is easy to check that $\mathcal L_q(q(t))$ must be a singleton, thus by (ii) in Lemma \ref{fuzhu} we also get $h'(t)=0.$ Therefore the proof of (iii) is finished.

Finally (iv) follows from (iii) and the fact that $p$ is continuous when $q$ is strictly increasing.
\end{proof}

In Theorem \ref{thm1} and Lemma \ref{lem1}, $g$ is only defined on the compact interval $[m,M]$. However, in order to apply the energy-Casimir method, we need a suitable extension of $g$ to the whole real line
such that its essential properties are retained. The following lemma makes such an extension possible.

\begin{lemma}\label{mM}
Let $[m, M]$ be a compact interval with $m<M, $ and $q$ be a $C^1$ and nondecreasing function defined on $[m,M].$  Then there exists a function $\tilde q:\mathbb R\to\mathbb R$ such that
\begin{itemize}
\item[(i)] $\tilde q\in C^1(\mathbb R)$, and $\tilde q(s)=q(s)$  whenever $s\in [m,M]$;
\item [(ii)] $\tilde q$ is strictly increasing in $(-\infty, m]$ and $[M,+\infty)$;
\item[(iii)] there exists positive constants $c_1,c_2$ such that \[\lim_{s\to+\infty}\frac{\tilde q(s)}{s}=c_1,\quad \lim_{s\to-\infty}\frac{\tilde q(s)}{s}=c_2.\] \end{itemize}

\end{lemma}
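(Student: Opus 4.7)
The plan is to extend $q$ piecewise, keeping $\tilde{q}(s) = q(s)$ on $[m,M]$ and attaching asymptotically linear, strictly increasing, $C^1$-matched extensions on the two tails. The only subtlety is the degenerate case $q'(m) = 0$ or $q'(M) = 0$, where a naive affine extension would fail to be strictly monotone.

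For the right tail, I would set, for $s \geq M$,
\[
\tilde{q}(s) = q(M) + \int_M^s \psi_+(t)\,dt, \qquad \psi_+(t) = q'(M) + 1 - e^{-(t-M)^2}.
\]
The choice of $\psi_+$ is dictated by three simultaneous requirements: $\psi_+(M) = q'(M)$ forces $C^1$-matching at $s = M$; $\psi_+(t) > q'(M) \geq 0$ for every $t > M$ ensures $\tilde{q}$ is strictly increasing on $[M, +\infty)$ (since then $\tilde{q}(s) - \tilde{q}(M) = \int_M^s \psi_+(t)\,dt > 0$ whenever $s > M$); and $\psi_+(t) \to q'(M) + 1$ as $t \to +\infty$, which, after integrating and dividing by $s$, yields $\tilde{q}(s)/s \to q'(M) + 1 =: c_1 > 0$. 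A completely symmetric definition on $(-\infty, m]$, with $\psi_-(t) = q'(m) + 1 - e^{-(t-m)^2}$, supplies the left tail and produces $c_2 = q'(m) + 1 > 0$.

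Gluing is automatic: values and one-sided derivatives of the three pieces agree at $s = m$ and $s = M$ by construction, so $\tilde{q} \in C^1(\mathbb{R})$ and all three assertions (i)--(iii) follow directly. There is no genuine obstacle. The Gaussian bump $1 - e^{-(t-M)^2}$ (and its counterpart near $m$) is inserted precisely to handle the possibility $q'(M) = 0$: it vanishes at the endpoint so that $C^1$-matching is preserved, but is strictly positive outside so that strict monotonicity holds even in the degenerate case. Any smooth nonnegative function with the same three qualitative properties at the endpoint, in the interior of the tail, and at infinity would serve equally well.
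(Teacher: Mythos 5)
Your construction is correct and follows essentially the same approach as the paper: glue onto each endpoint an explicit tail whose derivative matches $q'$ at the endpoint, is strictly positive beyond it, and tends to a positive constant at infinity, which yields (i)--(iii) exactly as you argue. The only difference is cosmetic: the paper splits into the cases $q'(m)>0$ (affine extension) and $q'(m)=0$ (a quadratic patch followed by an affine piece), whereas your Gaussian bump $1-e^{-(t-M)^2}$ handles both cases in a single formula.
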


\begin{proof}
We only give the extension of $q$ on $(-\infty,m]$. The other half can be constructed similarly.

It is clear that $g'(m)\geq 0$.  If $g'(m)>0$, we define for each $s\in(-\infty,m]$
\begin{equation}
\tilde q(s)=
q'(m)(s-g(m)).
\end{equation}
If $g'(m)=0$, we define 
\begin{equation}
\tilde q(s)=
\begin{cases}
q(m)-(s-m)^2&\mbox{ if } m-1\leq s\leq m,\\
2(x-m+1)+q(m)-1 &\mbox{ if } s< m-1.
\end{cases}
\end{equation}
It can be verified directly that the function $\tilde q$ defined above satisfies all the  required properties. 
\end{proof}

In the proof of Theorem \ref{thm1} in Section 5, a key ingredient is to use the fact that the first eigenfunction of an elliptic operator is of constant sign.
Although this result can be found in many textbooks, we give the statement below for the reader's convenience.
\begin{lemma}\label{fefu}
Let $c\in C^\alpha(\bar D)$, and $\mu_1$ be the first eigenvalue of the elliptic operator $-\Delta+c$, i.e.,
\[\mu_1=\inf\left\{\int_D|\nabla u|^2 +cu^2dx\mid u\in H^1_0(D),\| u\|_{L^2(D)=1}\right\}.\]
If $\tilde u\in H^1_0(D),\tilde u\neq 0$ is the first eigenfunction of $-\Delta+c$, or equivalently, 
\[ \frac{\int_D|\nabla \tilde u|^2 +c\tilde u^2dx}{\int_D\tilde u^2dx}=\mu_1,\]
then either $\tilde u> 0$ or $\tilde u< 0$ in $D$.
\end{lemma}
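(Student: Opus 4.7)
The plan is to reduce to the case of a nonnegative first eigenfunction by passing to $|\tilde u|$, and then to invoke the strong maximum principle to force strict positivity. The H\"older assumption $c\in C^\alpha(\bar D)$ is exactly what is needed to promote the weak eigenfunction to a classical $C^{2,\alpha}$ solution so that the maximum principle applies in its sharpest form.

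First I would use the standard fact that for any $u\in H^1_0(D)$ one has $|u|\in H^1_0(D)$ with $|\nabla|u||=|\nabla u|$ almost everywhere, a consequence of Stampacchia's theorem that $\nabla u=0$ almost everywhere on $\{u=0\}$. Consequently
\[
\int_D|\nabla|u||^2+c|u|^2\,dx=\int_D|\nabla u|^2+cu^2\,dx,\qquad \bigl\||u|\bigr\|_{L^2(D)}=\|u\|_{L^2(D)},
\]
so the Rayleigh quotient is unchanged under $u\mapsto|u|$. It follows that $|\tilde u|$ is also a minimizer, hence satisfies the Euler--Lagrange equation $-\Delta|\tilde u|+c|\tilde u|=\mu_1|\tilde u|$ in $D$ weakly, with $|\tilde u|\geq 0$ and $|\tilde u|\not\equiv 0$.

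Next I would invoke elliptic regularity: since $c\in C^\alpha(\bar D)$, a standard $L^p$-to-Schauder bootstrap starting from $|\tilde u|\in L^2(D)$ yields $|\tilde u|\in C^{2,\alpha}(\bar D)$, so the equation holds classically. Choosing $K>0$ large enough that $c-\mu_1+K\geq 0$ on $\bar D$, I rewrite this as
\[
-\Delta|\tilde u|+(c-\mu_1+K)|\tilde u|=K|\tilde u|\geq 0,
\]
a linear elliptic inequality with nonnegative zeroth-order coefficient and nonnegative, not identically zero right-hand side. The classical strong maximum principle (e.g.\ Gilbarg--Trudinger, Theorem~3.5) then forces $|\tilde u|>0$ throughout $D$. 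Combined with the continuity of $\tilde u$ and the connectedness of $D$, this shows $\tilde u$ does not vanish anywhere in $D$ and therefore has constant sign. The only step requiring genuine care is the first one, where one must verify that the absolute value preserves both $H^1_0$ membership and the Dirichlet energy; everything thereafter is standard elliptic theory.
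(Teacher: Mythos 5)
Your proof is correct and follows essentially the same route as the paper, which does not write out an argument but simply cites Chapter 6 of Evans: the standard proof there likewise rests on the invariance of the Rayleigh quotient under $u\mapsto|u|$ (equivalently, splitting into $u^{+}$ and $u^{-}$), elliptic regularity, and the strong maximum principle. Your bootstrap to $C^{2,\alpha}$ using $c\in C^{\alpha}(\bar D)$ is exactly the step the paper's accompanying remark singles out as the reason for the H\"older hypothesis, so nothing is missing.
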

\begin{proof}
See Chapter 6 in \cite{LCE} for example. 
\end{proof}
\begin{remark}
In Lemma \ref{fefu}, the condition $c\in C^\alpha(\bar D)$ is necessary. In fact, a crucial step in the proof of Lemma \ref{fefu} is to show that any weak solution to  
the following elliptic equation must be of $C^2$ 
\begin{equation}
\begin{cases}
-\Delta u+cu=\mu_1u,&x\in D,\\
u=0,&x\in\partial D.
\end{cases}
\end{equation}
To show this, we need to apply Schauder theory, where the coefficient $c$ is required to be of $C^\alpha.$
\end{remark}

The following lemma will be used in the proof of Arnol'd's second stability theorem in Section 3.
\begin{lemma}\label{25yela}
For any $\phi\in L^2(D)$, it holds that 
\begin{equation}\label{25la}
\int_D\phi\mathcal G\phi dx\leq\frac{1}{\lambda_1} \int_D\phi^2 dx,
\end{equation}
and the equality holds if and only if $\phi$ is an eigenfunction of $-\Delta$ in $D$ with zero boundary condition associated with the principal eigenvalue $\lambda_1.$
\end{lemma}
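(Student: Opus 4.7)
The plan is to reduce the inequality to a term-by-term comparison using the spectral decomposition of the Dirichlet Laplacian. Since $D$ is bounded and smooth, $-\Delta$ with zero boundary condition admits a complete $L^2(D)$-orthonormal basis of eigenfunctions $\{e_k\}_{k\geq 1}$, with eigenvalues $0<\lambda_1\leq\lambda_2\leq\cdots\to+\infty$. This is the natural setting because $\mathcal G$ is the inverse of $-\Delta$, so it is diagonalized by exactly the same basis.

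First I would write $\phi\in L^2(D)$ as $\phi=\sum_{k\geq 1}c_k e_k$ with $c_k=\int_D\phi e_k\,dx$ and $\sum_k c_k^2=\|\phi\|_{L^2(D)}^2<+\infty$. Since $\mathcal G e_k=\lambda_k^{-1}e_k$ and $\mathcal G:L^2(D)\to L^2(D)$ is bounded (indeed compact and self-adjoint), I would next identify
\[
\mathcal G\phi=\sum_{k\geq 1}\frac{c_k}{\lambda_k}e_k\quad\text{in }L^2(D).
\]
Then a direct Parseval computation gives
\[
\int_D\phi\,\mathcal G\phi\,dx=\sum_{k\geq 1}\frac{c_k^2}{\lambda_k},\qquad \int_D\phi^2\,dx=\sum_{k\geq 1}c_k^2.
\]
Using $\lambda_k\geq \lambda_1$ for every $k$, each term of the first series is bounded by $\lambda_1^{-1}$ times the corresponding term of the second, which immediately yields \eqref{25la}.

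For the equality case I would argue that $\sum_k c_k^2/\lambda_k=\lambda_1^{-1}\sum_k c_k^2$ forces $c_k(\lambda_k^{-1}-\lambda_1^{-1})=0$ for every $k$, i.e.\ $c_k=0$ whenever $\lambda_k>\lambda_1$. Hence $\phi$ lies in the eigenspace associated with $\lambda_1$, and conversely any such eigenfunction saturates the inequality.

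There is no serious obstacle here; the only thing to double-check is the justification of the spectral expansion of $\mathcal G\phi$ and the application of Parseval, both of which follow from the fact that $\mathcal G$ is a bounded self-adjoint operator on $L^2(D)$ with the $e_k$'s as eigenfunctions. If one preferred to avoid the spectral language, an equivalent route is to write $u=\mathcal G\phi\in H^1_0(D)\cap H^2(D)$, observe $\int_D\phi\mathcal G\phi\,dx=\int_D|\nabla u|^2\,dx=-\int_D u\Delta u\,dx$, apply Cauchy–Schwarz to get $\int_D|\nabla u|^2\,dx\leq \|u\|_{L^2}\|\Delta u\|_{L^2}$, and then use Poincaré $\|u\|_{L^2}^2\leq \lambda_1^{-1}\|\nabla u\|_{L^2}^2$ to conclude; the equality cases match up after noting that equality in Cauchy–Schwarz gives $\Delta u=cu$ while equality in Poincaré forces $u$ to be a first eigenfunction. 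I would include the spectral argument as the main proof since the equality characterization is cleaner there.
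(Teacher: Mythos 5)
Your proposal is correct and follows essentially the same route as the paper: expand $\phi$ in the $L^2$-orthonormal eigenbasis of the Dirichlet Laplacian, compare $\sum_k c_k^2/\lambda_k$ with $\lambda_1^{-1}\sum_k c_k^2$ term by term, and read off the equality case from the vanishing of the coefficients with $\lambda_k>\lambda_1$. The alternative Cauchy--Schwarz/Poincar\'e argument you sketch is a fine extra, but the main spectral proof matches the paper's.
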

\begin{proof}
Let $\{\phi_k\}_{k=1}^{+\infty}$ be the orthogonal basis of $L^2(D)$ satisfying 
\[\phi_k\in H^1_0(D),\quad-\Delta \phi_k=\lambda_k \phi_k,\quad\|\phi_k\|_{L^2(D)}=1,\]
where $\{\lambda_k\}_{k=1}^{+\infty}$ is the set of eigenvalues of $-\Delta$ in $D$ with zero Dirichlet data, satisfying
\begin{equation}\label{increa}
0<\lambda_1<\lambda_2\leq\lambda_3\leq\cdot\cdot\cdot.
\end{equation}
For any $\phi\in L^2(D)$, we expand it in terms of  the basis $\{\phi_k\}_{k=1}^{+\infty}$ as follows
\[\phi=\sum_{k=1}^{+\infty}a_k\phi_k.\]
Obviously 
\begin{equation}\label{el11}
\sum_{k=1}^{+\infty}a^2_k=\|\phi\|^2_{L^2(D)}.
\end{equation}
Moreover, it is easy to check that
\[\int_D\phi\mathcal G\phi dx=\sum_{k=1}^{+\infty}\frac{a^2_k}{\lambda_k}.\]
Hence by \eqref{increa} and \eqref{el11} we have
\begin{equation}\label{0011}
\lambda_1\int_D\phi\mathcal G\phi dx=\sum_{k=1}^{+\infty}\frac{\lambda_1}{\lambda_k}a_k^2\leq \sum_{k=1}^{+\infty}\frac{\lambda_1}{\lambda_1}a_k^2= \|\phi\|^2_{L^2(D)},
\end{equation}
which is exactly \eqref{25la}.
Moreover, it is easy to see that the inequality in \eqref{0011} is an equality if and only if $a_k=0, \forall\,k\geq 2$, which means that the equality in \eqref{25la} holds if and only if $\phi=a_1\phi_1$.

\end{proof}

\section{Energy-Casimir method}

The energy-Casimir method was first proposed by Arnol'd in the 1960s and has become a very powerful tool in the stability analysis of steady solutions of infinite-dimensional Hamiltonian systems. 
In this section, we first recall this method and then show how it can be applied to prove nonlinear stability of 2D steady Euler flows. Although the results in this section have already appeared in the literature, 
we give them new and simplified proofs based on Burton's stability criterion, from which stronger stability follows.

\subsection{Energy-Casimir method and Arnol'd's stability theorems}
Let $\bar\omega\in L^\infty(D)$ be a steady solution satisfying $\bar\omega=g(\mathcal G\bar\omega)$ in $D$, where $g\in C(\mathbb R)$ is strictly increasing. To study the nonlinear stability of $\bar\omega$, Arnol'd considered a flow-invariant functional 
\begin{equation}\label{ecf}
EC(\omega)=E(\omega)-\int_D F(\omega)dx,
\end{equation}
where $F(s)=\int_0^sg^{-1}(\tau)d\tau$. 
Notice that there are two parts in the definition of $EC$, i.e., the energy part $E$ and the Casimir part $\int_DF(\omega)dx$, thus $EC$ is often called the energy-Casimir functional.
By choosing $EC$ as the Lyapunov functional and using conservative quantities of the vorticity equation,
Arnol'd showed that  if $F$ is concave (Arnol'd's first stability theorem) or $F''$ is very convex (Arnol'd's second stability theorem), then $\bar\omega$ is nonlinearly stable in the enstrophy norm.

 Below we give the precise statement of Arnol'd's first  and second stability theorems and present a simple proof based on Burton's stability criterion.  From the proof, we will see that  the steady solution in Arnol'd's first (second) stability theorem is in fact a global minimizer (maximizer) of $E$ on some rearrangement class.  

 \begin{theorem}[Arnol'd's first and second stability theorems, \cite{A1,A2}]\label{ar12}
Let $\bar\omega\in L^\infty(D)$ be a steady solution satisfying $\bar\omega=g(\mathcal G\bar\omega)$ a.e. in $D$ for some function $g:[m,M]\to\mathbb R$, where $m=\min_{\bar D}\mathcal G\bar\omega$ and $M=\max_{\bar D}\mathcal G\bar\omega$. 
\begin{itemize}
 \item[(1)] If $g$ is strictly decreasing in $[m,M],$ then $\bar\omega$ is the unique global minimizer of $E$ on $\mathcal R_{\bar\omega}$.
 \item [(2)] If $g\in C^1[m,M]$ satisfying $0<\min_{[m,M]}g'\leq \max_{[m,M]}g'\leq\lambda_1$, then $\bar\omega$ is the unique global maximizer of $E$ on $\mathcal R_{\bar\omega}$.
 \end{itemize}
 In both cases, by Theorem \ref{bcr}, $\bar\omega$ is nonlinearly stable in the  $L^p$ norm of the vorticity with respect to initial perturbations in $L^\infty(D)$, where $p\in(1,+\infty).$
\end{theorem}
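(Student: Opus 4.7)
The plan is to apply the energy-Casimir method already hinted at by the subsection title, and then invoke Burton's stability criterion (Theorem \ref{bcr}). The key observation is that for any continuous $F$, the Casimir $\int_D F(\omega)\,dx$ is constant on $\mathcal R_{\bar\omega}$, so on that set comparing $E$ is equivalent to comparing $EC(\omega):=E(\omega)-\int_D F(\omega)\,dx$. I will choose $F$ so that $F'(\bar\omega)=\mathcal G\bar\omega$, which makes $\bar\omega$ a critical point of $EC$ on all of $L^\infty(D)$, and then use convexity or concavity of $F$ to conclude that the critical point is in fact a global extremum on $\mathcal R_{\bar\omega}$.

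For Part (1), I extend $g$ to a strictly decreasing $C^1$ function $\tilde g$ on $\mathbb R$ of the form in Lemma \ref{fuzhu0}, set $p=\tilde g^{-1}$, and $F(s)=\int_0^s p(\tau)\,d\tau$, which is concave because $p$ is decreasing. Since $\bar\omega=g(\mathcal G\bar\omega)$ with $\mathcal G\bar\omega(D)\subset[m,M]$, Lemma \ref{fuzhu0}(ii) yields $p(\bar\omega)=\mathcal G\bar\omega$. Writing
\[
E(\omega)-E(\bar\omega)=\tfrac12\int_D(\omega-\bar\omega)\mathcal G(\omega-\bar\omega)\,dx+\int_D\mathcal G\bar\omega\,(\omega-\bar\omega)\,dx,
\]
the concavity bound $F(\omega)\le F(\bar\omega)+\mathcal G\bar\omega\,(\omega-\bar\omega)$, integrated and combined with the rearrangement identity $\int_DF(\omega)=\int_DF(\bar\omega)$, forces $\int_D\mathcal G\bar\omega\,(\omega-\bar\omega)\,dx\ge 0$. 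Hence $E(\omega)-E(\bar\omega)\ge\tfrac12\int_D(\omega-\bar\omega)\mathcal G(\omega-\bar\omega)\,dx\ge 0$, with equality only when $\omega=\bar\omega$ (positive definiteness of $\mathcal G$). So $\bar\omega$ is the unique global minimizer of $E$ on $\mathcal R_{\bar\omega}$.

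For Part (2), I extend $g$ via Lemma \ref{mM}, set $p=\tilde g^{-1}$ and $F(s)=\int_0^s p(\tau)\,d\tau$. Now $F''(s)=1/g'(g^{-1}(s))\ge 1/\lambda_1$ on the relevant range, so $F$ is strongly convex and
\[
F(\omega)\ge F(\bar\omega)+\mathcal G\bar\omega\,(\omega-\bar\omega)+\tfrac1{2\lambda_1}(\omega-\bar\omega)^2.
\]
Integrating and using rearrangement invariance gives $\int_D\mathcal G\bar\omega\,(\omega-\bar\omega)\,dx\le -\tfrac1{2\lambda_1}\int_D(\omega-\bar\omega)^2\,dx$, so combining with Lemma \ref{25yela}
\[
E(\omega)-E(\bar\omega)\le\tfrac12\int_D(\omega-\bar\omega)\mathcal G(\omega-\bar\omega)\,dx-\tfrac1{2\lambda_1}\int_D(\omega-\bar\omega)^2\,dx\le 0.
\]
For uniqueness, equality in Lemma \ref{25yela} requires $\omega-\bar\omega=c\phi_1$ with $\phi_1$ the first Dirichlet eigenfunction of $-\Delta$; since $\phi_1$ is of constant sign and rearrangements preserve mean, $c\int_D\phi_1\,dx=0$ forces $c=0$ and $\omega=\bar\omega$. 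Thus $\bar\omega$ is the unique global maximizer.

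The main technical point is to ensure that extending $g$ outside $[m,M]$ preserves the identity $p(\bar\omega)=\mathcal G\bar\omega$ that makes the cross term cancel; this is automatic because $\mathcal G\bar\omega(\overline D)\subset[m,M]$ and $\tilde g\equiv g$ there. The only other subtle point is the uniqueness argument in Part (2), where the constant-sign property of $\phi_1$ together with the zero-mean constraint on $\omega-\bar\omega$ closes the gap. Once global (hence isolated) extremality on $\mathcal R_{\bar\omega}$ is established, Theorem \ref{bcr} immediately delivers nonlinear stability in the $L^p$ norm of the vorticity for every $p\in(1,+\infty)$ with respect to $L^\infty$ perturbations, which is strictly stronger than the enstrophy-norm stability originally obtained by Arnol'd.
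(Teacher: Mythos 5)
Your proposal is correct and follows essentially the same route as the paper: the same energy--Casimir functional with $F$ built from the (generalized) inverse of an extended $g$, the same concavity/strong-convexity bounds with $f(\bar\omega)=\mathcal G\bar\omega$ cancelling the cross term, and the same use of Lemma \ref{25yela} plus the constant sign of the first eigenfunction and the zero-mean constraint to get strictness in part (2). The only difference is organizational (you isolate and bound the cross term $\int_D\mathcal G\bar\omega(\omega-\bar\omega)\,dx$ rather than writing the whole difference as $EC(\bar\omega+\phi)-EC(\bar\omega)$), which is algebraically identical.
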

\begin{remark}
It is not known whether (1) of Theorem \ref{ar12} still holds when $g$ is only nonincreasing in $[m,M]$. 
\end{remark}

\begin{proof}
We prove (1) first. Without loss of generality,  we assume that $g$ is defined on $\mathbb R$ such that $g$ is strictly decreasing and satisfies
\[\lim_{s\to+\infty}g(s)=-\infty, \quad\lim_{s\to-\infty}g(s)=+\infty.\]
Define 
\[f(s)=\inf\{\tau\mid g(\tau)\leq s\}\mbox{ and }
 F(s)=\int_0^sf(\tau)d\tau.\]
  By (ii) in Lemma \ref{fuzhu0}, we have 
 \begin{equation}\label{spinx}
 f(\bar\omega)=f(g(\mathcal G\bar\omega))=\mathcal G\bar\omega \mbox{ a.e. in }D.
 \end{equation}
Now for any $\phi\in \mathcal R_{\bar\omega}-\bar\omega$, $\phi\neq 0$ (thus $\int_D\phi\mathcal G\phi dx>0$), we compare $E(\bar\omega+\phi)$ and $E(\bar\omega)$ as follows
\begin{align*}
E(\bar\omega+\phi)-E(\bar\omega)&=EC(\bar\omega+\phi)-EC(\bar\omega)\\
&=\frac{1}{2}\int_D\phi\mathcal G\phi dx-\int_D F(\bar\omega+\phi)-F(\bar\omega)-\mathcal G\bar\omega\phi dx\\
&=\frac{1}{2}\int_D\phi\mathcal G\phi dx-\int_D F(\bar\omega+\phi)-F(\bar\omega)-f(\bar\omega)\phi dx\\
&\geq \frac{1}{2}\int_D\phi\mathcal G\phi dx\\
&>0.
\end{align*}
Note that in the first equality we used the fact that $EC$ is a constant on $\mathcal R_{\bar\omega}$ (recall by Lemma \ref{fuzhu0} that $F$ is locally Lipschitz continuous), in the third equality we used \eqref{spinx}, and in the first inequality we used
\begin{equation*}
F(\bar\omega+\phi)\leq F(\bar\omega)+f(\bar\omega)\phi \,\,\mbox{ a.e. in }D,
 \end{equation*}
which follows from (iv) in Lemma \ref{fuzhu0}.

Now we prove (2). Without loss of generality, we assume that $g$ satisfies
\begin{itemize}
    \item [(i)] $g\in C^1(\mathbb R)$ and $\lim_{s\to+\infty}g(s)=+\infty, \lim_{s\to-\infty}g(s)=-\infty$;
    \item  [(ii)]$\inf_{\mathbb R}g'> 0$;
    \item  [(iii)]$\sup_{\mathbb R}g' \leq \lambda_1$.
\end{itemize}
This can be done by repeating the proof of Lemma \ref{mM}. Obviously $g$ is strictly increasing on $\mathbb R,$ thus we can define 
its inverse function $f=g^{-1}$.  Let $F(s)=\int_0^sf(\tau)d\tau.$
According to (ii), $f\in C^1(\mathbb R),$ thus $F\in C^2(\mathbb R)$. Moreover, from (iii) we see that  $\inf_{\mathbb R}f'\geq1/\lambda_1$. Now for any $\phi\in\mathcal R_{\bar\omega}-\bar\omega, \phi\neq 0$, we have
\begin{align*}
E(\bar\omega+\phi)-E(\bar\omega)&=EC(\bar\omega+\phi)-EC(\bar\omega)\\
&=\frac{1}{2}\int_D\phi\mathcal G\phi dx-\int_D F(\bar\omega+\phi)-F(\bar\omega)-\mathcal G\bar\omega\phi dx\\
&=\frac{1}{2}\int_D\phi\mathcal G\phi dx-\int_D F(\bar\omega+\phi)-F(\bar\omega)-f(\bar\omega)\phi dx\\
&\leq \frac{1}{2}\int_D\phi\mathcal G\phi dx-\frac{1}{2}\inf_{\mathbb R}f'\int_D \phi^2 dx\\
&\leq \frac{1}{2}\left(\int_D\phi\mathcal G\phi dx-\frac{1}{\lambda_1} \int_D\phi^2 dx\right).
\end{align*}
To finish the proof, it suffices to show that
\begin{equation}\label{qbunq}
\int_D\phi\mathcal G\phi dx-\frac{1}{\lambda_1} \int_D\phi^2 dx<0.
\end{equation}
In fact, by Lemma \ref{25yela} we have 
\[\int_D\phi\mathcal G\phi dx-\frac{1}{\lambda_1} \int_D\phi^2 dx\leq 0.\]
If the equality holds, then by  Lemma \ref{25yela}  $\phi$ is an eigenfunction of $-\Delta$ associated with the principal eigenvalue $\lambda_1.$ Taking into account Lemma \ref{fefu} and the fact that $\phi\neq 0,$ we see that $\phi>0$ or $\phi<0$ in $D$. Therefore 
\[\int_D\phi dx>0 \mbox{ or }\int_D\phi dx<0. \]
On the other hand, since $\phi\in \mathcal R_{\bar\omega}-\bar\omega$, we have
\[\int_D\phi dx=0,\]
which is a contradiction. Thus \eqref{qbunq} is proved.
\end{proof}

\subsection{Wolansky and Ghil's supporting functional method}
Arnold's second stability theorem is about   \emph{global} maximizers of $E$ on some rearrangement class. For \emph{local} maximizers, one may impose the condition that the second variation of $EC$ at $\bar\omega$ is negative definite (note that by the relation $\bar\omega=g(\mathcal G\bar\omega)$ the first variation of $EC$ at $\bar\omega$ is zero), i.e.,
 \[\int_{D}\phi\mathcal G\phi dx-\int_Df'(\bar\omega)\phi^2dx\leq -\delta \int_D\phi^2dx,\,\forall \phi\,\in L^2(D)\]
 for some $\delta>0.$
 However, this does not work. The reason is that  in the vorticity space the remainder of the Taylor expansion can not be controlled properly.

To overcome this difficulty, Wolansky and Ghil  \cite{WG0} introduced the method of supporting functionals, turning the problem into one in the stream function space. Below we state their result and give a simplified proof.

\begin{theorem}[Wolansky and Ghil, \cite{WG0}]\label{wg1}
Let $\bar\omega\in L^\infty(D)$ be a steady solution satisfying $\bar\omega=g(\mathcal G\bar\omega)$ for some $g\in C^1[m,M]$, where $m=\min_{\bar D}\mathcal G\bar\omega$ and $M=\max_{\bar D}\mathcal G\bar\omega$. 
Suppose that $g$ is strictly increasing on $[m,M]$, and there exists some $\delta>0$ such that
\begin{equation}\label{ssswg}
\int_D\phi\mathcal G\phi dx-\int_Dg'(\mathcal G\bar\omega)(\mathcal G\phi)^2 dx\geq \delta \int_D(\mathcal G\phi)^2 dx,\,\forall \phi\,\in  \mathcal R_{\bar\omega}-\bar\omega.
\end{equation}
Then  $\bar\omega$ is an isolated local maximizer of $E$ on $\mathcal R_{\bar\omega}$, thus by Theorem \ref{bcr} $\bar\omega$ is nonlinearly stable in  the $L^p$ norm of the vorticity with respect to initial perturbations in $L^\infty(D)$, where $p\in(1,+\infty).$
\end{theorem}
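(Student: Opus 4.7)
The plan is to run the energy--Casimir argument but to re-expand the Casimir remainder in the \emph{stream function} rather than in the vorticity. As the text just before the statement notes, a vorticity-space expansion would naturally produce the quadratic $\int_D f'(\bar\omega)\phi^2\,dx$ (with $f=g^{-1}$), which is not the correct object: the hypothesis \eqref{ssswg} controls $\int_D g'(\bar\psi)\eta^2\,dx$ with $\eta=\mathcal G\phi$ instead. The mechanism that exchanges these two quadratics is Fenchel--Young duality between $G(s)=\int_0^s g(\tau)\,d\tau$ and $F(s)=\int_0^s f(\tau)\,d\tau$, combined with rearrangement invariance of the Casimir.

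First I would extend $g$ to a strictly increasing $C^1$ function on $\mathbb R$ of linear growth at infinity via Lemma~\ref{mM}, keeping the same name. Fix $\omega=\bar\omega+\phi\in\mathcal R_{\bar\omega}$ with $\phi\not\equiv 0$, and write $\eta=\mathcal G\phi=\psi-\bar\psi\in H_0^1(D)$. Rearrangement invariance forces $\int_D F(\omega)\,dx=\int_D F(\bar\omega)\,dx$, which together with $\bar\psi=f(\bar\omega)$ rearranges to
\[
\int_D\bar\omega\eta\,dx=\int_D f(\bar\omega)\phi\,dx=-\int_D\bigl[F(\omega)-F(\bar\omega)-f(\bar\omega)\phi\bigr]\,dx.
\]
Since $(F,G)$ is a convex conjugate pair and $\bar\omega=g(\bar\psi)$ realizes equality in Fenchel--Young at $(\bar\omega,\bar\psi)$, subtracting the inequality at $(\omega,\psi)$ from the equality at $(\bar\omega,\bar\psi)$ gives the pointwise supporting inequality
\[
F(\omega)-F(\bar\omega)-f(\bar\omega)\phi\geq\phi\eta-\bigl[G(\psi)-G(\bar\psi)-g(\bar\psi)\eta\bigr].
\]
Since $g\in C^1$, the bracketed term equals $\int_0^1(1-s)g'(\bar\psi+s\eta)\eta^2\,ds$ by Taylor's formula with integral remainder. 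Substituting into $E(\omega)-E(\bar\omega)=\int_D\bar\omega\eta\,dx+\tfrac12\int_D|\nabla\eta|^2\,dx$ and using $\int_D\phi\eta\,dx=\int_D|\nabla\eta|^2\,dx$ produces
\[
E(\omega)-E(\bar\omega)\leq-\tfrac12\Bigl[\int_D|\nabla\eta|^2\,dx-\int_D g'(\bar\psi)\eta^2\,dx\Bigr]+R(\phi),
\]
with $R(\phi):=\int_D\int_0^1(1-s)\bigl[g'(\bar\psi+s\eta)-g'(\bar\psi)\bigr]\eta^2\,ds\,dx$. Hypothesis \eqref{ssswg} bounds the bracketed quantity below by $\delta\int_D\eta^2\,dx$, so $E(\omega)-E(\bar\omega)\leq-\tfrac{\delta}{2}\int_D\eta^2\,dx+R(\phi)$.

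Everything then reduces to a smallness estimate on $R(\phi)$. Since $\mathcal R_{\bar\omega}$ is bounded in $L^\infty(D)$, smallness of $\|\phi\|_{L^1(D)}$ gives smallness of $\|\phi\|_{L^q(D)}$ for any finite $q$ by interpolation, and then the planar elliptic estimate $\|\mathcal G\phi\|_{L^\infty(D)}\leq C\|\phi\|_{L^q(D)}$ for $q>2$ forces $\|\eta\|_{L^\infty(D)}$ small. Uniform continuity of the extended $g'$ on bounded sets then makes $\sup_{x\in D,\,s\in[0,1]}|g'(\bar\psi+s\eta)-g'(\bar\psi)|<\delta/2$ once $\|\phi\|_{L^1(D)}$ is below some threshold, whence $|R(\phi)|\leq\tfrac{\delta}{4}\int_D\eta^2\,dx$ and $E(\omega)-E(\bar\omega)\leq-\tfrac{\delta}{4}\int_D\eta^2\,dx<0$ for $\phi\not\equiv 0$. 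This makes $\bar\omega$ an isolated local maximizer of $E$ on $\mathcal R_{\bar\omega}$ in the $L^1$ topology (equivalently the $L^q$ topology on the bounded set $\mathcal R_{\bar\omega}$), so Theorem~\ref{bcr} yields the claim. The main obstacle I expect is the Fenchel--Young bookkeeping leading to the supporting inequality for $F(\omega)-F(\bar\omega)-f(\bar\omega)\phi$; once that is in place, the remainder estimate is a routine application of elliptic regularity.
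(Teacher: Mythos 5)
Your proposal is correct and is essentially the paper's own argument: the Fenchel--Young subtraction you perform is exactly the paper's supporting functional $\mathcal D(\omega)=-\tfrac12\int_D\omega\mathcal G\omega\,dx+\int_D\hat F(\mathcal G\omega)\,dx$ (with $\hat F=G$ up to an additive constant), and the Taylor expansion of $G$ about $\bar\psi$ with the remainder killed by $\|\mathcal G\phi\|_{L^\infty}\to 0$ via the $L^\infty$ bound on $\mathcal R_{\bar\omega}$ and elliptic regularity is the paper's Step 2. The only difference is presentational: you give a direct quantitative estimate where the paper argues by contradiction along a sequence.
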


\begin{remark}
In  Wolansky and Ghil's original statement, $g'$ is supposed to have a positive lower bound, and \eqref{ssswg} is required to hold for any $\phi\in L^2(D)$.

\end{remark}
\begin{remark}
If $\max_{\bar D}g'(\mathcal G\bar\omega)<\lambda_1,$ then clearly \eqref{ssswg} holds. However, if we only know $\max_{\bar D}g'(\mathcal G\bar\omega)\leq\lambda_1,$ we are not sure whether \eqref{ssswg} still holds, even if $g'(\mathcal G\bar\omega)\equiv \lambda_1$ in $D$.

\end{remark}

\begin{proof}We divide the proof into three steps.

{\bf Step 1}:  We extend $g$ to be a function defined on $\mathbb R$ as in the proof of Lemma \ref{mM}.   Obviously $g$ is strictly increasing.
Denote $f=g^{-1}$ and  define $F(s)=\int_0^sf(\tau)d\tau.$ Let $\hat F$ be the Legendre transform of $F$ (see \eqref{dolt}).
By (iv) of Lemma \ref{lt} (taking $q=f$), we see that  $\hat F\in C^1(\mathbb R)$ and $\hat F'=g.$ Introduce 
\[\mathcal D(\omega)=-\frac{1}{2}\int_D\omega\mathcal G\omega dx+\int_D \hat F(\mathcal G\omega) dx,\,\,\omega\in\mathcal R_{\bar\omega}.\]
We claim that $\mathcal D$ is a supporting functional of $EC$, i.e., 
\begin{itemize}
    \item [(i)]$\mathcal D(\bar\omega)=EC(\bar\omega)$;    \item [(ii)]$\mathcal D(\omega)\geq EC(\omega),\,\forall\, \omega\in \mathcal R_{\bar\omega}.$ 
\end{itemize}
To prove (i), we recall by (i) in Lemma \ref{lt} that
$\hat F(s)=sg(s)-F(g(s)), \forall\,s\in\mathbb R.$
Thus 
\begin{align*}
\mathcal D(\bar\omega)&=-\frac{1}{2}\int_D\bar\omega\mathcal G\bar\omega dx+\int_D \hat F(\mathcal G\bar\omega) dx\\
&=-\frac{1}{2}\int_D\bar\omega\mathcal G\bar\omega dx+\int_D g(\mathcal G\bar\omega)\mathcal G\bar\omega-F(g(\mathcal G\bar\omega)) dx\\
&=-\frac{1}{2}\int_D\bar\omega\mathcal G\bar\omega dx+\int_D \bar\omega\mathcal G\bar\omega-F(\bar\omega) dx\\
&=EC(\bar\omega).
\end{align*}
To prove (ii), using the definition of Legendre transform, we have for each $\omega\in \mathcal R_{\bar\omega}$
  \begin{align*}
EC(\omega)&=\frac{1}{2}\int_D\bar\omega\mathcal G\bar\omega dx-\int_D  F(\omega) dx\\
&=-\frac{1}{2}\int_D\omega\mathcal G\omega dx+\int_D \omega\mathcal G\omega-F(\omega)dx\\
&\leq-\frac{1}{2}\int_D\omega\mathcal G\omega dx+\int_D\hat F(\mathcal G\omega) dx\\
&=\mathcal D(\omega).
\end{align*}

{\bf Step 2}: We show by contradiction that $\bar\omega$ is a strict local maximizer of  $\mathcal D$ 
on $\mathcal R_{\bar\omega}$.  Suppose that there exists a sequence $\{\phi_n\}_{n=1}^{+\infty}\subset \mathcal R_{\bar\omega}-\bar\omega$ such that  $\phi_n\neq 0$ for each $n$, $\|\phi_n\|_{L^2(D)}\to 0$ as $n\to+\infty$, and 
\begin{equation*}
\mathcal D(\bar\omega+\phi_n)\geq \mathcal D(\bar\omega)\quad\mbox{ for each }n,
\end{equation*}
or equivalently,
\begin{equation*}
-\frac{1}{2}\int_D(\bar\omega+\phi_n)\mathcal G(\bar\omega+\phi_n) dx+\int_D\hat F(\mathcal G\bar\omega+\mathcal G\phi_n) dx\geq -\frac{1}{2}\int_D\bar\omega\mathcal G\bar\omega dx+\int_D\hat F(\mathcal G\bar\omega) dx\,\,\,\mbox{ for each }n.
\end{equation*}
Using the mean value theorem and the fact $\hat F'=g$ (by Lemma \ref{lt}), we get
\begin{equation}\label{exp}
\int_D\phi_n\mathcal G\phi_n dx\leq\int_D g'(\mathcal G\bar\omega+\theta_n\mathcal G\phi_n)(\mathcal G\phi_n)^2dx \quad\mbox{ for each }n,
\end{equation}
where $0\leq\theta_n\leq 1$.
Since $\phi_n\to 0$ in $L^2(D)$ and $\phi_n\in \mathcal R_{\bar\omega}-\bar\omega$, we see that $\phi_n\to 0$ in $L^q(D)$ for each $q\in[1,+\infty)$, thus by elliptic estimate we obtain as $n\to+\infty$
\begin{equation}\label{lin0}\|\mathcal G\phi_n\|_{L^\infty(D)}\to0.\end{equation}
Now \eqref{exp} and \eqref{lin0} together yield
\[\int_D\phi_n\mathcal G\phi_n dx\leq \int_D g'(\mathcal G\bar\omega)(\mathcal G\phi_n)^2dx+ o(1)\int_D (\mathcal G\phi_n)^2,\]
where $o(1)\to0$  as $n\to+\infty.$ This obviously contradicts \eqref{ssswg}.

{\bf Step 3}: By Step 1 and Step 2, we deduce that $\bar\omega$ must be a strict local maximizer of $EC$ on $\mathcal R_{\bar\omega}.$ Since the Casimir functional $\int_DF(\omega) dx$ is a constant on $\mathcal R_{\bar\omega},$ we see that $\bar\omega$ is a strict local maximizer of $E$ on $\mathcal R_{\bar\omega}.$

\end{proof}

In 1998, Wolansky and Ghil wrote another paper \cite{WG} to show that the condition \eqref{ssswg} can be relaxed further. Since the proof is very complicated, we only summarize their result as follows.

\begin{theorem}[Wolansky and Ghil, \cite{WG}]\label{wg2}
Let $\bar\omega\in L^\infty(D)$ be a steady solution satisfying $\bar\omega=g(\mathcal G\bar\omega)$ for some $g\in C^1[m,M]$, $\min_{[m,M]}g'>0$, where $m=\min_{\bar D}\mathcal G\bar\omega$ and $M=\max_{\bar D}\mathcal G\bar\omega$. 
Define
  \[W:=\left\{\psi\mid \psi=h(\mathcal G\bar\omega), h\in C^2[m,M]\right\},\]
  \[<\psi_1,\psi_2>:=\int_D\psi_1\psi_2g'(\mathcal G\bar\omega)dx.\]
 Suppose there exist some $m$-dimensional subspace $W_m$ of $W$ for some positive integer $m$, a $g'(\mathcal G\bar\omega)$ weighted orthogonal basis $\{\xi^0_1,\cdot\cdot\cdot,\xi^0_m\}$  of $W_m$,  and some $\delta>0$, such that
\begin{equation}\label{bbb}
\int_D\phi\mathcal G\phi dx-\int_Dg'(\mathcal G\bar\omega)(\mathcal G\phi)^2 dx+\sum_{i=1}^m|<\xi^0_i,\mathcal G\phi>|^2\geq \delta \int_D(\mathcal G\phi)^2 dx,\,\forall \phi\in  L^2(D).
\end{equation}
Then $\bar\omega$ is an isolated local maximizer of $E$ on $\mathcal R_{\bar\omega}$, thus by Theorem \ref{bcr} $\bar\omega$ is nonlinearly stable in  the $L^p$ norm of the vorticity with respect to initial perturbations in $L^\infty(D)$, where $p\in(1,+\infty).$.  
\end{theorem}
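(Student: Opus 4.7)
The plan is to adapt the three-step supporting-functional strategy used in the proof of Theorem~\ref{wg1} so as to absorb the finitely many ``bad'' directions encoded in the subspace $W_m$. For Step~1, I would proceed exactly as in the proof of Theorem~\ref{wg1}: extend $g$ to a strictly increasing $C^1(\mathbb R)$ function with linear growth via Lemma~\ref{mM}, set $f=g^{-1}$, $F(s)=\int_0^s f(\tau)\,d\tau$, and let $\hat F$ be the Legendre transform of $F$, so that Lemma~\ref{lt} yields $\hat F\in C^1(\mathbb R)$ with $\hat F'=g$. The base supporting functional
$$\mathcal D_0(\omega):=-\tfrac12\int_D\omega\mathcal G\omega\,dx+\int_D\hat F(\mathcal G\omega)\,dx$$
satisfies $\mathcal D_0(\bar\omega)=EC(\bar\omega)$ and $\mathcal D_0\geq EC$ on $L^\infty(D)$, and one computes (exactly as in Step~2 of the proof of Theorem~\ref{wg1})
$$2\bigl(\mathcal D_0(\bar\omega+\phi)-\mathcal D_0(\bar\omega)\bigr)=-\!\int_D\phi\mathcal G\phi\,dx+\int_D g'(\mathcal G\bar\omega)(\mathcal G\phi)^2\,dx+o(\|\mathcal G\phi\|_{L^2}^2).$$

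To supply the missing positive correction $\sum_i|\langle\xi_i^0,\mathcal G\phi\rangle|^2$ appearing in \eqref{bbb}, I would try to attach to $\mathcal D_0$ a combination of Casimir-type functionals $\mathcal C_i(\omega):=\int_D K_i(\omega)\,dx$, forming
$$\mathcal D(\omega):=\mathcal D_0(\omega)+\tfrac12\sum_{i=1}^m a_i\bigl(\mathcal C_i(\omega)-\mathcal C_i(\bar\omega)\bigr)^2$$
with $a_i>0$. Since every $\mathcal C_i$ is a Casimir, the correction vanishes identically on $\mathcal R_{\bar\omega}$; hence $\mathcal D\equiv\mathcal D_0$ on $\mathcal R_{\bar\omega}$, and $\mathcal D$ remains a supporting functional of $EC$ with $\mathcal D(\bar\omega)=EC(\bar\omega)$. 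Expanded on the full space, however, $\mathcal D$ picks up the extra second-order term $\tfrac12\sum_i a_i\bigl(\int_D K_i'(\bar\omega)\phi\,dx\bigr)^2$. Rewriting $\langle\xi_i^0,\mathcal G\phi\rangle=\int_D\mathcal G[\xi_i^0 g'(\mathcal G\bar\omega)]\,\phi\,dx$ via the symmetry of $\mathcal G$, and using the rearrangement identity
$$\int_D K_i'(\bar\omega)\phi\,dx=-\tfrac12\int_D K_i''(\bar\omega)\phi^2\,dx+O(\|\phi\|_{L^3}^3)\quad\text{for }\phi\in\mathcal R_{\bar\omega}-\bar\omega,$$
one would then try to solve a finite-dimensional linear-algebra problem for $\{K_i,a_i\}$ so that the Casimir quadratic correction reproduces $\sum_i|\langle\xi_i^0,\mathcal G\phi\rangle|^2$ modulo $o(\|\mathcal G\phi\|_{L^2}^2)$.

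Granting such a matching, the proof is completed by repeating Step~2 of the proof of Theorem~\ref{wg1} for the modified functional $\mathcal D$: assuming for contradiction a sequence $\phi_n\in\mathcal R_{\bar\omega}-\bar\omega$ with $\phi_n\to 0$ in $L^2$ and $\mathcal D(\bar\omega+\phi_n)\geq\mathcal D(\bar\omega)$, one uses elliptic regularity to get $\|\mathcal G\phi_n\|_{L^\infty}\to 0$ and deduces
$$\int_D\phi_n\mathcal G\phi_n\,dx-\int_D g'(\mathcal G\bar\omega)(\mathcal G\phi_n)^2\,dx+\sum_{i=1}^m|\langle\xi_i^0,\mathcal G\phi_n\rangle|^2\leq o(\|\mathcal G\phi_n\|_{L^2}^2),$$
contradicting \eqref{bbb}. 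Step~3 then transfers strict local maximization from $\mathcal D$ to $EC$ and hence to $E$ on $\mathcal R_{\bar\omega}$ exactly as in the proof of Theorem~\ref{wg1} (using constancy of $\int_D F(\omega)\,dx$ on $\mathcal R_{\bar\omega}$), and Theorem~\ref{bcr} delivers the claimed nonlinear stability. The main obstacle is precisely the matching step in the preceding paragraph: since $\mathcal G[\xi_i^0g'(\mathcal G\bar\omega)]$ is generally not a function of $\bar\omega(x)$ alone, the linear functional $\langle\xi_i^0,\mathcal G\phi\rangle$ is not the tangent-space defect of any pure Casimir, and a naive substitution fails because $\bigl|\int K_i'(\bar\omega)\phi\,dx\bigr|^2=O(\|\phi\|_{L^2}^4)$ cannot in general be controlled by $o(\|\mathcal G\phi\|_{L^2}^2)$ for high-frequency perturbations. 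Overcoming this mismatch, presumably by exploiting the weighted orthonormality of $\{\xi_i^0\}$ in $W$ in conjunction with higher-order rearrangement constraints, is where the ``very complicated'' technical content of \cite{WG} lies.
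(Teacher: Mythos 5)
First, be aware that the paper does not actually prove Theorem \ref{wg2}: it states explicitly that ``since the proof is very complicated, we only summarize their result,'' and defers entirely to \cite{WG}. What the paper does prove (Sections 4--5) is only the special case $m=1$, $W_1=\{\text{constants}\}$, via Lemma \ref{lem1}, and by a mechanism different from the one you propose. So your proposal must stand on its own for the general statement, and it contains a genuine gap --- one you partially flag yourself, but which is more structural than a mere ``matching problem.''

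The gap is this: a correction of the form $\tfrac12\sum_i a_i\bigl(\mathcal C_i(\omega)-\mathcal C_i(\bar\omega)\bigr)^2$ with each $\mathcal C_i$ a Casimir vanishes \emph{identically} on $\mathcal R_{\bar\omega}$ --- exactly, not merely to second order. Hence $\mathcal D\equiv\mathcal D_0$ on the rearrangement class, and in your Step~2 the contradiction hypothesis $\mathcal D(\bar\omega+\phi_n)\geq\mathcal D(\bar\omega)$ yields precisely the same inequality as for $\mathcal D_0$, with no trace of the term $\sum_i|\langle\xi_i^0,\mathcal G\phi_n\rangle|^2$; to conclude you would need the stronger hypothesis \eqref{ssswg} of Theorem \ref{wg1}, not \eqref{bbb}. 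Your own expansion confirms the order mismatch: $\bigl(\int_D K_i'(\bar\omega)\phi\,dx\bigr)^2$ is $O(\|\phi\|_{L^2}^4)$ in general directions and exactly $0$ in rearrangement directions, whereas the needed contribution $|\langle\xi_i^0,\mathcal G\phi\rangle|^2$ lives at order $\|\mathcal G\phi\|_{L^2}^2$. The mechanism actually used --- in \cite{WG} and in the paper's proof of the $m=1$ case --- is different: one introduces a family of supporting functionals $\mathcal D_\lambda(\omega)=-\tfrac12\int_D\omega\mathcal G\omega\,dx+\int_D G(\mathcal G\omega-\lambda)\,dx+\lambda M_0$ indexed by a Lagrange multiplier $\lambda$ that shifts the \emph{stream function}, sets $\hat{\mathcal D}=\min_\lambda\mathcal D_\lambda$, and uses the conservation law $\int_D\omega\,dx=M_0$ to show that the minimizing $\bar\lambda_n$ is asymptotically $\int_D g'(\mathcal G\bar\omega)\mathcal G\phi_n\,dx\big/\int_D g'(\mathcal G\bar\omega)\,dx$ (equation \eqref{zzz}); substituting this back into the quadratic form is what produces the extra positive term $\bigl(\int_D g'(\mathcal G\bar\omega)\mathcal G\phi_n\,dx\bigr)^2\big/\int_D g'(\mathcal G\bar\omega)\,dx$, i.e.\ $|\langle\xi_1^0,\mathcal G\phi_n\rangle|^2$, at the correct order $O(\|\mathcal G\phi_n\|_{L^2}^2)$. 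Even with this device the author reports that the general $W_m$ still seems to require $\min_{[m,M]}g'>0$, which is consistent with your (correct) suspicion that the general case is where the real technical content of \cite{WG} lies.
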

 \begin{remark}
The conclusion ``$\bar\omega$ is an isolated local maximizer of $E$ on $\mathcal R_{\bar\omega}$" in Theorem \ref{wg2} does not appear directly in Wolansky and Ghil's original statement.  However, this can be easily verified by checking their proof carefully.
 \end{remark}
  \begin{remark}
 The assumption $\min_{[m,M]}g'>0$ is indispensable in Theorem \ref{wg2}, since the proof requires $F\in C^2$. 
 \end{remark}
 \begin{remark}
In most cases the condition \eqref{bbb} in Theorem \ref{wg2} is not easy to check. A refined version of Theorem \ref{wg2} can be found in \cite{LZ2}, where \eqref{bbb} is replaced by a condition on the level sets of the stream function.
 \end{remark}
 
Wolansky and Ghil proved Theorem \ref{wg2} by modifying the method of supporting functionals in \cite{WG0}, with appropriate use of vorticity conservation of the vorticity equation. In the next section, we will provide a new and simplified proof of their result for $m=1$ and 
\[W_1=\{\psi\mid \psi\mbox{ is a constant in }D\}.\]

In our proof, the condition $\min_{[m,M]}g'>0$ can be replaced by a weaker one, i.e.,  $g$ is nondecreasing on $[m,M]$.

\section{Proof of Lemma \ref{lem1} }
Throughout this section we  assume that $g$ is defined on $\mathbb R$ and satisfies (i)(ii)(iii) in Lemma \ref{mM}. 

To prove Lemma \ref{lem1}, the basic idea is to modify the functional $\mathcal D$ in the proof of Theorem \ref{wg1}, such that it is more ``close" to the energy-Casimir functional. Of course, since $g$ is only nondecreasing, the inverse function of $g$ is not well-defined, and thus it is not reasonable to define the energy-Casimir functional as in \eqref{ecf} anymore. Instead, we use the following definition of $EC$ in this section
\[EC(\omega)=\frac{1}{2}\int_D\omega\mathcal G\omega dx-\int_D\hat G(\omega) dx.\]
By Lemma \ref{lt} (taking $q=g$), the above definition is in fact a generalization of the definition \eqref{ecf}.

Define a new functional $\mathcal D_\lambda$ on $\mathcal R_{\bar\omega}$
\[\mathcal D_{\lambda}(\omega)=-\frac{1}{2}\int_D\omega\mathcal G \omega dx +\int_DG(\mathcal G\omega-\lambda) dx+\lambda M_0,\]
where  $\lambda \in\mathbb R$ is a parameter and
\begin{equation}\label{m01}M_0=\int_D\bar\omega dx.\end{equation}

Note that $\mathcal D_\lambda$ is in fact a special case of the functional (3.8) introduced in \cite{WG}. A noteworthy difference is that the $M_0$ in \cite{WG} is  a parameter, instead of a fixed number determined by \eqref{m01}. This difference makes the proof in \cite{WG} more complicated. The necessity of taking $M_0$ as a parameter is to deal with perturbations that are not in $\mathcal R_{\bar\omega}.$ However, since our proof is based on Burton's stability criterion, we only need to consider perturbations in $\mathcal R_{\bar\omega},$ therefore we can take $M_0$ to be fixed. 
\begin{lemma}\label{epex}
It holds that
\begin{equation}\label{algec}
\mathcal D_\lambda(\omega)\geq EC(\omega),\quad\forall\,\lambda\in\mathbb R,\,\omega\in\mathcal R_{\bar\omega}.
\end{equation}
\begin{proof}
 For any $\lambda\in\mathbb R$ and $\omega\in\mathcal R_{\bar\omega}$, we have
\begin{align*}
EC(\omega)&=\frac{1}{2}\int_D\omega\mathcal G\omega dx-\int_D\hat G(\omega) dx\\
&=\frac{1}{2}\int_D\omega\mathcal G\omega dx-\int_D\lambda\omega+\hat G(\omega) dx+\lambda M_0\\
 &\leq \frac{1}{2}\int_D\omega\mathcal G\omega dx-\int_D\lambda\omega+\omega(\mathcal G\omega-\lambda)-G(\mathcal G\omega-\lambda)dx+\lambda M_0\\
 &=-\frac{1}{2}\int_D\omega\mathcal G\omega dx+\int_DG(\mathcal G\omega-\lambda)dx+\lambda M_0\\
&=\mathcal D_{\lambda}(\omega).
\end{align*}
Here we used  
\[\int_D\omega dx=\int_D\bar\omega dx=M_0, \quad\forall\,\omega\in\mathcal R_{\bar\omega},\]
and
\[\hat G(\omega)\geq \omega(\mathcal G\omega-\lambda)-G(\mathcal G\omega-\lambda),\quad\forall \,\lambda\in\mathbb R\]
by Lemma \ref{lt}. 
\end{proof}
\end{lemma}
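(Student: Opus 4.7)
The plan is to derive the inequality $\mathcal D_\lambda(\omega) \geq EC(\omega)$ by expressing $EC(\omega)$ in a form where the Young-type inequality coming from the Legendre transform (Lemma \ref{lt}(ii)) can be applied pointwise, and then using the fact that rearrangements preserve the total integral.

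First I would start from the definition $EC(\omega) = \tfrac{1}{2}\int_D \omega\mathcal G\omega\,dx - \int_D \hat G(\omega)\,dx$ and insert a harmless zero built from the mass constraint on $\mathcal R_{\bar\omega}$. Since any $\omega \in \mathcal R_{\bar\omega}$ is a rearrangement of $\bar\omega$, we have $\int_D \omega\,dx = \int_D \bar\omega\,dx = M_0$, so for any $\lambda \in \mathbb R$ we can write $\lambda M_0 = \int_D \lambda \omega\,dx$. Adding and subtracting this term yields
\[
EC(\omega) = \tfrac{1}{2}\int_D \omega\mathcal G\omega\,dx - \int_D \bigl(\lambda\omega + \hat G(\omega)\bigr)\,dx + \lambda M_0.
\]

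Next I would apply the Legendre transform inequality at each point. By Lemma \ref{lt}(ii), taking $s = \mathcal G\omega(x) - \lambda$ and $\tau = \omega(x)$, we get $\hat G(\omega(x)) + G(\mathcal G\omega(x) - \lambda) \geq \omega(x)(\mathcal G\omega(x) - \lambda)$ for a.e.\ $x\in D$, i.e.
\[
\lambda\omega(x) + \hat G(\omega(x)) \geq \omega(x)\mathcal G\omega(x) - G(\mathcal G\omega(x) - \lambda).
\]
Integrating this over $D$ and substituting into the expression above gives
\[
EC(\omega) \leq \tfrac{1}{2}\int_D \omega\mathcal G\omega\,dx - \int_D \omega\mathcal G\omega\,dx + \int_D G(\mathcal G\omega - \lambda)\,dx + \lambda M_0 = \mathcal D_\lambda(\omega),
\]
which is the desired inequality.

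The only subtle point, and it is a mild one, is making sure the Legendre inequality of Lemma \ref{lt} can be invoked for the (possibly merely nondecreasing) $g$ at hand. This is legitimate because $g$ has already been extended to all of $\mathbb R$ under the assumptions (i)--(iii) of Lemma \ref{mM} at the beginning of the section, so $G$ and its Legendre dual $\hat G$ are defined on $\mathbb R$, $\hat G$ is finite and locally Lipschitz by Lemma \ref{lt}(iii), and the pointwise Young-type inequality holds without any integrability obstruction since $\omega \in L^\infty(D)$ and $\mathcal G\omega \in L^\infty(D)$. No other ingredient is needed; the whole proof is a one-line chain of equalities and one pointwise inequality, so there is no genuine obstacle to overcome.
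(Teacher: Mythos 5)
Your proposal is correct and follows essentially the same route as the paper's own proof: both insert $\lambda M_0=\int_D\lambda\omega\,dx$ via the rearrangement mass constraint and then apply the pointwise Legendre--Young inequality $\hat G(\omega)+G(\mathcal G\omega-\lambda)\geq\omega(\mathcal G\omega-\lambda)$ from Lemma \ref{lt}(ii). Your remark about the extension of $g$ to all of $\mathbb R$ matches the standing assumption stated at the start of Section 4, so nothing is missing.
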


\begin{lemma}\label{epex}
For each fixed $\omega\in \mathcal R_{\bar\omega}$, 
there exits  $ \bar\lambda\in\mathbb R$,
depending on $\omega$, such that 
\begin{equation}\label{ba68}
\mathcal D_{\bar\lambda}(\omega)=\min_{\lambda\in\mathbb R}\mathcal D_{\lambda}(\omega).
\end{equation}
Moreover, any such $\bar\lambda$ satisfies
\begin{equation}\label{blam}
\int_Dg(\mathcal G\omega-\bar\lambda)dx=M_0.
\end{equation}
\end{lemma}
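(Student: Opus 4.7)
The plan is to reduce the minimization to one-variable convex analysis. The key observation is that $\mathcal D_\lambda(\omega)$ differs from the scalar function
$$h(\lambda):=\int_D G(\mathcal G\omega-\lambda)\,dx+\lambda M_0$$
by a $\lambda$-independent constant, so I need only find $\bar\lambda\in\mathbb R$ that minimizes $h$. Since $G'=g$ is nondecreasing, $G$ is convex, hence $h$ is convex in $\lambda$. Using continuity of $g$ together with boundedness of $\mathcal G\omega$, dominated convergence shows $h\in C^1(\mathbb R)$ with
$$h'(\lambda)=-\int_D g(\mathcal G\omega-\lambda)\,dx+M_0.$$

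Next I would establish existence of a minimizer by showing $h'(\lambda)\to\pm\infty$ as $\lambda\to\pm\infty$ and invoking the intermediate value theorem. The growth condition (iii) of Lemma \ref{mM}, which is in force throughout Section 4, gives $g(s)/s\to c_2>0$ as $s\to-\infty$ and $g(s)/s\to c_1>0$ as $s\to+\infty$; combined with $\mathcal G\omega\in L^\infty(D)$, this yields $g(\mathcal G\omega(x)-\lambda)\to-\infty$ uniformly in $x\in D$ as $\lambda\to+\infty$, and $g(\mathcal G\omega(x)-\lambda)\to+\infty$ uniformly in $x\in D$ as $\lambda\to-\infty$. Integrating over $D$ produces $h'(\lambda)\to+\infty$ and $h'(\lambda)\to-\infty$ respectively. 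Continuity of $h'$ then supplies $\bar\lambda\in\mathbb R$ with $h'(\bar\lambda)=0$, and convexity of $h$ ensures that any such critical point is automatically a global minimizer, which proves \eqref{ba68}.

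For the second assertion, any minimizer $\bar\lambda$ of the $C^1$ function $h$ must be a critical point, i.e., $h'(\bar\lambda)=0$, and this rearranges at once to $\int_D g(\mathcal G\omega-\bar\lambda)\,dx=M_0$, which is \eqref{blam}. I do not expect a serious obstacle here: the entire argument is one-variable convex analysis, and the only nontrivial input is the linear growth of $g$ at $\pm\infty$ that was built into the extension at the start of Section 4. That extension is exactly what makes $h$ coercive even though the original $g$ was given only on the compact interval $[m,M]$ while the shifted argument $\mathcal G\omega-\lambda$ ranges over all of $\mathbb R$.
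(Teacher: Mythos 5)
Your proof is correct and follows essentially the same route as the paper: reduce to a one-variable problem in $\lambda$, obtain a minimizer from the coercivity supplied by the linear growth of the extended $g$, and read off \eqref{blam} as the first-order condition $\frac{d}{d\lambda}\mathcal D_\lambda(\omega)=0$. The only cosmetic difference is that you get existence from convexity of $h$ plus a sign change of $h'$, whereas the paper argues directly that $\mathcal D_\lambda(\omega)\to+\infty$ as $|\lambda|\to+\infty$ and invokes continuity.
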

\begin{proof}
Fix $\omega\in\mathcal R_{\bar\omega}$. It is clear that $\mathcal D_\lambda(\omega)$ is continuous with respect to $\lambda$ and satisfies
\[\lim_{|\lambda|\to+\infty}\mathcal D_\lambda(\omega)=+\infty,\]
Thus there exists some $\bar\lambda$ such that
\[\mathcal D_{\bar\lambda}(\omega)=\min_{\lambda\in\mathbb R}\mathcal D_{\lambda}(\omega).\]
Moreover, any such $\bar\lambda$ necessarily satisfies
\[\frac{d\mathcal D_{\lambda}(\omega)}{d\lambda}\bigg|_{\lambda=\bar \lambda}=0,\]
or equivalently
\[\int_Dg(\mathcal G\omega-\bar\lambda) dx=M_0.\]

\end{proof}

It is worth mentioning that for fixed $\omega\in\mathcal R_{\bar\omega},$ the $\bar\lambda$ in Lemma \ref{epex} may be not unique.  However, the next lemma shows that for $\bar\omega$, the corresponding $\bar\lambda$ is unique and is exactly 0.

\begin{lemma}\label{ba67}
 If $\bar\lambda$ satisfies $\int_Dg(\mathcal G\bar\omega-\bar\lambda) dx=M_0,
$ then $\bar\lambda=0.$
\end{lemma}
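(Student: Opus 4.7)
The plan is to rewrite the hypothesis using the identity $M_0=\int_D\bar\omega\,dx=\int_Dg(\mathcal G\bar\omega)\,dx$, which turns the equation $\int_Dg(\mathcal G\bar\omega-\bar\lambda)\,dx=M_0$ into
\[
\int_Dg(\mathcal G\bar\omega-\bar\lambda)\,dx=\int_Dg(\mathcal G\bar\omega)\,dx.
\]
Then I would argue by contradiction, supposing first that $\bar\lambda>0$ and afterwards, symmetrically, that $\bar\lambda<0$. The pointwise inequality $g(\mathcal G\bar\omega-\bar\lambda)\le g(\mathcal G\bar\omega)$ holds everywhere thanks to the monotonicity of $g$, so the strategy reduces to producing a set of positive Lebesgue measure on which strict inequality holds; this will contradict the equality of the integrals.

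To locate such a set, I would exploit the extension of $g$ given by Lemma~\ref{mM}: the function $g$ is \emph{strictly} increasing on $(-\infty,m]$ and on $[M,+\infty)$, while it is only nondecreasing on $[m,M]$. For the case $\bar\lambda>0$, continuity of $\mathcal G\bar\omega$ on $\bar D$ and the definition $m=\min_{\bar D}\mathcal G\bar\omega$ guarantee that the sub-level set
\[
A=\{x\in D:\mathcal G\bar\omega(x)<m+\bar\lambda\}
\]
has positive measure. On $A$ one has $\mathcal G\bar\omega(x)-\bar\lambda<m\le\mathcal G\bar\omega(x)$, so the extension property gives $g(\mathcal G\bar\omega(x)-\bar\lambda)<g(m)\le g(\mathcal G\bar\omega(x))$, and the strict pointwise gap on $A$ combined with the nonstrict inequality on $D\setminus A$ yields
\[
\int_Dg(\mathcal G\bar\omega-\bar\lambda)\,dx<\int_Dg(\mathcal G\bar\omega)\,dx=M_0,
\]
contradicting the assumption. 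The case $\bar\lambda<0$ is handled identically, replacing $m$ by $M=\max_{\bar D}\mathcal G\bar\omega$ and the sub-level set $A$ by the super-level set $\{x\in D:\mathcal G\bar\omega(x)>M+\bar\lambda\}$, and invoking the strict monotonicity of $g$ on $[M,+\infty)$ to push $g(\mathcal G\bar\omega-\bar\lambda)$ above $g(M)\ge g(\mathcal G\bar\omega)$.

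The only subtle point I anticipate is justifying that the set $A$ really has positive measure: $m$ need not be attained in the interior of $D$ (it could be attained at the boundary, e.g.\ if $\bar\omega\ge0$ so that $m=0$). This is handled by the definition of $m$ as an infimum together with continuity of $\mathcal G\bar\omega$: for any $\varepsilon>0$, the open set $\{\mathcal G\bar\omega<m+\varepsilon\}\cap D$ is nonempty and hence has positive measure. Taking $\varepsilon=\bar\lambda$ (or $\varepsilon=|\bar\lambda|$ in the other case) supplies the required set. Everything else is a one-line consequence of the extension from Lemma~\ref{mM} and the monotonicity of $g$, so no further ingredients are needed.
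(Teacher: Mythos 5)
Your proof is correct and follows essentially the same route as the paper's: rewrite the hypothesis as $\int_Dg(\mathcal G\bar\omega-\bar\lambda)\,dx=\int_Dg(\mathcal G\bar\omega)\,dx$, use the nondecreasing property of $g$ for the pointwise comparison, and obtain strict inequality on a positive-measure set near the level $m$ (resp.\ $M$) via the strict monotonicity of the extension from Lemma~\ref{mM}. The only cosmetic difference is that the paper works with the set $\{m<\mathcal G\bar\omega<m+\bar\lambda\}$ whereas you use $\{\mathcal G\bar\omega<m+\bar\lambda\}$, and you spell out the positive-measure point that the paper leaves implicit.
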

\begin{proof}
Suppose by contradiction that there exists some $\lambda\neq 0$ such that $\int_Dg(\mathcal G\bar\omega-\bar\lambda) dx=M_0.
$ Without loss of generality, assume that $\bar\lambda>0$. Then by \eqref{m01} and the relation $\bar\omega=g(\mathcal G\bar\omega)$ we have
\begin{equation}\label{ba000}
\int_Dg(\mathcal G\bar\omega)dx=\int_Dg(\mathcal G\bar\omega-\bar\lambda) dx.
\end{equation}
Since $g$ is nondecreasing, we have $g(\mathcal G\bar\omega)\geq g(\mathcal G\bar\omega-\bar\lambda)$ in $D$, thus \eqref{ba000} implies
\begin{equation}\label{ba001}
g(\mathcal G\bar\omega)=g(\mathcal G\bar\omega-\bar\lambda) \quad\mbox{ in } D.
\end{equation}
However, this is impossible, since on the set $\{x\in D\mid m<\mathcal G\bar\omega(x)<m+\bar\lambda\}$  we have 
\[g(\mathcal G\bar\omega-\bar\lambda)<g(m)\leq g(\mathcal G\bar\omega).\]
Here we used the fact that $g$ is strictly increasing on $(-\infty,m]$.
\end{proof}

Now for any $\omega\in\mathcal R_{\bar\omega}$ we choose  some $\bar\lambda=\bar\lambda(\omega)$ such that Lemma \ref{epex} holds.  Define 
\begin{equation}\label{hddf}
\hat{\mathcal D}(\omega)=\min_{\lambda\in\mathbb R}\mathcal D_{\lambda}(\omega)=-\frac{1}{2}\int_D\omega\mathcal G \omega dx +\int_DG(\mathcal G\omega-\bar\lambda(\omega)) dx+\bar\lambda(\omega) M_0.
\end{equation}
By Lemma \ref{ba67}, it is obvious that
\begin{equation}\label{ba90}
\hat{\mathcal D}(\bar\omega)=-\frac{1}{2}\int_D\bar\omega\mathcal G \bar\omega dx +\int_DG(\mathcal G\bar\omega) dx.
\end{equation}

The next two lemmas indicate that $\hat{\mathcal D}$ is a supporting functional of $EC$.
\begin{lemma}\label{cont0}
$\hat{\mathcal D}(\omega)\geq EC(\omega)$ for any $\omega\in \mathcal R_{\bar\omega}.$
\end{lemma}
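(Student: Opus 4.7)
The proof should be essentially immediate from what has already been established. The plan is to invoke the pointwise (in $\lambda$) inequality $\mathcal D_\lambda(\omega)\geq EC(\omega)$ proved in the first Lemma \ref{epex}, which holds for \emph{every} $\lambda\in\mathbb R$ and every $\omega\in\mathcal R_{\bar\omega}$, and then take the infimum over $\lambda\in\mathbb R$ on the left-hand side. Since the right-hand side $EC(\omega)$ does not depend on $\lambda$, this yields
\[
\hat{\mathcal D}(\omega)=\inf_{\lambda\in\mathbb R}\mathcal D_{\lambda}(\omega)\geq EC(\omega),
\]
and the infimum is in fact attained (so equals a minimum) by the existence result in the second Lemma \ref{epex}, which ensures that the definition \eqref{hddf} makes sense as a genuine minimum.

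Thus the key point is simply that $\hat{\mathcal D}$ was constructed precisely as the pointwise minimum of a one-parameter family of upper bounds for $EC$ on $\mathcal R_{\bar\omega}$. There is no real obstacle here; the only thing worth emphasizing in writing is that the inequality $\mathcal D_\lambda(\omega)\geq EC(\omega)$ used the membership $\omega\in\mathcal R_{\bar\omega}$ twice, once through the identity $\int_D\omega\,dx=M_0$ (so that the term $\lambda M_0$ can be absorbed correctly) and once implicitly through the fact that $\omega$ is in the domain on which $EC$ and the Legendre transform identities apply. Both of these carry over verbatim into the passage to the minimum in $\lambda$, so the conclusion follows with no further work.
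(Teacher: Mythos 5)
Your proposal is correct and is exactly the paper's argument: the paper's proof of this lemma is the one-line observation that the bound follows from the first Lemma \ref{epex} (the inequality $\mathcal D_\lambda(\omega)\geq EC(\omega)$ for all $\lambda$) together with the definition of $\hat{\mathcal D}$ as the minimum over $\lambda$. Your additional remarks about where $\omega\in\mathcal R_{\bar\omega}$ is used are accurate but already absorbed into that earlier lemma, so nothing further is needed.
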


\begin{proof}
It follows Lemma \ref{epex} and the definition of $\hat{\mathcal D}$.
\end{proof}

\begin{lemma}\label{cont}
$\hat{\mathcal D}(\bar\omega)=EC(\bar\omega).$
\end{lemma}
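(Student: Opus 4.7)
The plan is to evaluate both sides of the asserted equality explicitly and show they coincide, using only the previously established facts about $\bar\lambda(\bar\omega)$ and the equality case of the Legendre transform.

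First I would pin down the value of $\bar\lambda(\bar\omega)$. Since $\bar\omega\in\mathcal R_{\bar\omega}$, Lemma \ref{epex} guarantees a minimizer $\bar\lambda(\bar\omega)$ of $\lambda\mapsto\mathcal D_\lambda(\bar\omega)$, and any such minimizer must satisfy $\int_D g(\mathcal G\bar\omega-\bar\lambda(\bar\omega))\,dx=M_0$. Lemma \ref{ba67} then forces $\bar\lambda(\bar\omega)=0$. Plugging this into \eqref{hddf} gives
\[
\hat{\mathcal D}(\bar\omega)=-\frac{1}{2}\int_D\bar\omega\,\mathcal G\bar\omega\,dx+\int_D G(\mathcal G\bar\omega)\,dx,
\]
which is exactly \eqref{ba90}.

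Next I would exploit the equality case in the Legendre duality. By assumption $\bar\omega=g(\mathcal G\bar\omega)$ pointwise in $D$, so applying Lemma \ref{lt}(ii) with $s=\bar\omega(x)$ and $\tau=\mathcal G\bar\omega(x)$ yields the pointwise identity
\[
\hat G(\bar\omega)=\bar\omega\,\mathcal G\bar\omega-G(\mathcal G\bar\omega)\quad\text{in }D.
\]
Integrating this identity over $D$ and comparing with the definition of $EC$ gives
\[
EC(\bar\omega)=\frac{1}{2}\int_D\bar\omega\,\mathcal G\bar\omega\,dx-\int_D\hat G(\bar\omega)\,dx=-\frac{1}{2}\int_D\bar\omega\,\mathcal G\bar\omega\,dx+\int_D G(\mathcal G\bar\omega)\,dx,
\]
which matches the expression for $\hat{\mathcal D}(\bar\omega)$ obtained above.

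There is essentially no obstacle here; the lemma is a bookkeeping consequence of two earlier facts, namely that the minimizing parameter at $\bar\omega$ vanishes (Lemma \ref{ba67}) and that the Fenchel--Young inequality for the pair $(G,\hat G)$ is saturated precisely along the steady-state relation $\bar\omega=g(\mathcal G\bar\omega)$ (Lemma \ref{lt}(ii)). Combined with Lemma \ref{cont0}, this identification of $\hat{\mathcal D}(\bar\omega)$ with $EC(\bar\omega)$ is what allows $\hat{\mathcal D}$ to play the role of a supporting functional in the subsequent argument.
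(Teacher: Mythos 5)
Your proof is correct and follows essentially the same route as the paper: identify $\bar\lambda(\bar\omega)=0$ via Lemma \ref{ba67} to get \eqref{ba90}, then use the equality case of Lemma \ref{lt}(ii) along the relation $\bar\omega=g(\mathcal G\bar\omega)$ to convert $\int_D G(\mathcal G\bar\omega)\,dx$ into $\int_D\bar\omega\mathcal G\bar\omega-\hat G(\bar\omega)\,dx$. The only cosmetic difference is that you re-derive \eqref{ba90} inside the proof, whereas the paper records it just before the lemma.
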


\begin{proof}
Since $\bar\omega=g(\mathcal G\bar\omega)$, we deduce from (ii) in Lemma \ref{lt} that \[G(\mathcal G\bar \omega)+\hat G(\bar \omega)=\bar\omega\mathcal G \bar\omega \,\,\mbox{ a.e. in }D.\] Taking into account \eqref{ba90}, we have 
\begin{align*}
\hat{\mathcal D}(\bar\omega)
&=-\frac{1}{2}\int_D\bar\omega\mathcal G \bar\omega dx +\int_DG(\mathcal G\bar\omega) dx\\
&=-\frac{1}{2}\int_D\bar\omega\mathcal G \bar\omega dx +\int_D\bar\omega\mathcal G\bar\omega-\hat G(\bar \omega)dx\\
&=\frac{1}{2}\int_D\bar\omega\mathcal G \bar\omega dx -\int_D\hat G(\bar \omega)dx\\
&=EC(\bar\omega).
\end{align*}
\end{proof}

Now we are ready to prove Lemma \ref{lem1}.
\begin{proof}[Proof of Lemma \ref{lem1}]
We need only to prove that $\bar\omega$ is an isolated local maximizer of $\hat{\mathcal D}$  on $\mathcal R_{\bar\omega}$. In fact, if this is true, then by Lemma \ref{cont0} and Lemma \ref{cont}, we deduce that $\bar\omega$ is an isolated local maximizer of $EC$  on $\mathcal R_{\bar\omega},$ hence $\bar\omega$ is an isolated local maximizer of $E$  on $\mathcal R_{\bar\omega}.$

Suppose by contradiction that $\bar\omega$ is not an isolated local maximizer of $\hat{\mathcal D}$  on $\mathcal R_{\bar\omega}$.  Then there exists a sequence $\{\phi_n\}_{n=1}^{+\infty}\subset\mathcal R_{\bar\omega}-\bar\omega$ such that $\phi_n\neq0$ for each $n$, $\|\phi_n\|_{L^2(D)}\to 0$ as $n\to+\infty$,
and
\begin{equation}\label{67yhh}
\hat{\mathcal D}(\bar\omega+\phi_n)\geq \hat{\mathcal D}(\bar\omega).
\end{equation}
Choose $\bar\lambda_n$ such that 
\begin{equation}\label{ba68}
\mathcal D_{\bar\lambda_n}(\bar\omega+\phi_n)=\hat{\mathcal D}(\bar\omega+\phi_n).
\end{equation}
From\eqref{ba90}, \eqref{67yhh} and \eqref{ba68} we obtain
\begin{equation}\label{67y}
\begin{split}&-\frac{1}{2}\int_D(\bar\omega+\phi_n)\mathcal G(\bar\omega+\phi_n) dx+\int_DG(\mathcal G\bar\omega+\mathcal G\phi_n-\bar\lambda_n)dx+\bar\lambda_nM_0\\
&\geq -\frac{1}{2}\int_D\bar\omega\mathcal G\bar\omega dx+\int_DG(\mathcal G\bar\omega).
\end{split}
\end{equation}
or equivalently,
\[\int_D\phi_n\mathcal G\bar\omega dx+\frac{1}{2}\int_D\phi_n\mathcal G\phi_n dx\leq \int_DG(\mathcal G\bar\omega+\mathcal G\phi_n-\bar\lambda_n)-G(\mathcal G\bar\omega) dx+\bar\lambda_nM_0.\]
Using Taylor's theorem we have
\begin{equation}\label{kpp}
\begin{split}
&\int_D\phi_n\mathcal G\bar\omega dx+\frac{1}{2}\int_D\phi_n\mathcal G\phi_n dx\\
 \leq &
\int_D g(\mathcal G\bar\omega)(\mathcal G\phi_n-\bar\lambda_n)dx 
+\frac{1}{2}\int_D g'(\mathcal G\bar\omega+\theta_n(\mathcal G\phi_n-\bar\lambda_n))(\mathcal G\phi_n-\bar\lambda_n)^2dx+\bar\lambda_nM_0,
\end{split}
\end{equation}
where $0\leq \theta_n\leq 1.$ Recalling the relation $\bar\omega=g(\mathcal G\bar\omega)$, we get from \eqref{kpp} that
\begin{equation}\label{kppp}
\int_D\phi_n\mathcal G\phi_n dx\leq 
\int_D g'(\mathcal G\bar\omega+\theta_n(\mathcal G\phi_n-\bar\lambda_n))(\mathcal G\phi_n-\bar\lambda_n)^2dx.
\end{equation}
Below for simplicity we use $o(1)$ to denote various quantities that goes to 0 \emph{uniformly} as $n\to+\infty$. 
Therefore taking into account the continuity of $g'$ we get from \eqref{kppp} that
\begin{equation}\label{kpppp}
\int_D\phi_n\mathcal G\phi_n dx\leq 
\int_D g'(\mathcal G\bar\omega)(\mathcal G\phi_n-\bar\lambda_n)^2dx+o(1)\int_D (\mathcal G\phi_n-\bar\lambda_n)^2dx.
\end{equation}

Now we claim that
\begin{equation}\label{zzz}
\bar\lambda_n=\frac{\int_Dg'(\mathcal G\bar\omega)\mathcal G\phi_n dx}{\int_Dg'(\mathcal G\bar\omega)dx}+o(1)\int_D|\mathcal G\phi_n |dx.
\end{equation}
To this end, first we show that 
\begin{equation}\label{ahah}
\bar\lambda_n=o(1).
\end{equation}
By Lemma \eqref{epex}, $\bar\lambda_n$ necessarily satisfies
\begin{equation}\label{bal2}
\int_Dg(\mathcal G\bar\omega+\mathcal G\phi_n-\bar\lambda_n) dx=M_0.
\end{equation}
Hence $\{\bar\lambda_n\}_{n=1}^{+\infty}$ must be a bounded sequence. 
Now suppose by contradiction that \eqref{ahah} is not true. Then there exists some subsequence $\{\bar\lambda_{n_j}\}_{j=1}^{+\infty}$ such that $\bar\lambda_{n_j}\to\lambda_0$ for some $\lambda_0\neq 0$. Taking into account the fact that \[\lim_{j\to+\infty}\|\mathcal G\phi_{n_j}\|_{L^\infty(D)}=0,\] 
which can be verified easily by standard elliptic estimate, and passing the limit $j\to+\infty$ in \eqref{bal2},   we get 
\[\int_Dg(\mathcal G\bar\omega-\lambda_0)dx=M_0,\]
which  contradicts Lemma \ref{ba67}.

We continue to prove \eqref{zzz}. From \eqref{bal2} and the fact that
\[\int_Dg(\mathcal G\bar\omega)dx=\int_D\bar\omega dx=M_0,\]
we have
\[\int_Dg(\mathcal G\bar\omega+\mathcal G\phi_n-\bar\lambda_n)-g(\mathcal G\bar\omega) dx=0.\]
Then the mean value theorem yields 
\[\int_Dg'(\mathcal G\bar\omega+\mu_n(\mathcal G\phi_n-\bar\lambda_n))(\mathcal G\phi_n-\bar\lambda_n)dx=0,\]
where $0\leq \mu_{n}\leq 1$. Thus
\begin{align*}
\bar\lambda_n&=\frac{\int_Dg'(\mathcal G\bar\omega+\mu_n(\mathcal G\phi_n-\bar\lambda_n))\mathcal G\phi_n dx}{\int_Dg'(\mathcal G\bar\omega+\mu_n(\mathcal G\phi_n-\bar\lambda_n))dx}\\
&=\frac{\int_Dg'(\mathcal G\bar\omega)\mathcal G\phi_n dx+o(1)\int_D|\mathcal G\phi_n| dx}{\int_Dg'(\mathcal G\bar\omega)dx+o(1)}\\
&=\frac{\int_Dg'(\mathcal G\bar\omega)\mathcal G\phi_n dx}{\int_Dg'(\mathcal G\bar\omega)dx}+o(1)\int_D|\mathcal G\phi_n| dx.
\end{align*}
Here the assumption $\int_Dg'(\mathcal G\bar\omega)dx>0$ was used. This proves the claim.

Inserting \eqref{zzz} into \eqref{kpppp}, we obtain after performing a simple calculation
\begin{equation}\label{djj}
\int_D\phi_n\mathcal G\phi_n dx-\int_Dg'(\mathcal G\bar\omega)(\mathcal G\phi_n)^2+o(1)\int_D(\mathcal G\phi_n)^2dx+\frac{\left(\int_Dg'(\mathcal G\bar\omega)\mathcal G\phi_n dx\right)^2}{\int_Dg'(\mathcal G\bar\omega)dx}\leq0.
\end{equation}
Since $\phi_n\neq 0$ for each $n,$  we deduce that $\int_D(\mathcal G\phi_n)^2dx>0$ for each $n$,
thus \eqref{djj} contradicts \eqref{deta}. This finishes the proof.

\end{proof}

\begin{remark}
The above method can also be used to simplify the proof of Theorem \ref{wg2}. However, for general $W_m$, after some attempts we find that the condition ``$g'$ has a positive lower bound on $[m,M]$" is still needed. Fortunately, Lemma \ref{lem1} is enough for us to prove Theorem \ref{thm1}.
\end{remark}
\begin{remark}\label{bbbttt}
If we require \eqref{deta} to hold for any $\phi\in L^2(D)$, then by the same argument we can prove that $\bar\omega$ is in fact an isolated maximizer of $EC$  in $L^2(D)$ (by (iii) in Lemma \ref{mM} $EC(\omega)$ is well defined for $\omega\in L^2(D)$). More precisely, there exists some $\epsilon>0,$ such that for any $w\in L^2(D),$ $0<\|w-\bar\omega\|_{L^2(D)}<\epsilon,$ it holds that $EC(\bar\omega)>EC(w).$
\end{remark}

\section{Proof of Theorem \ref{thm1}}
In this section, we give the proof of Theorem \ref{thm1}.
\begin{proof}[Proof of Theorem \ref{thm1}]
First we assume that $\int_Dg'(\mathcal G\bar\omega)dx=0.$ We claim that in this case $\bar\omega$ is a constant in $D$, and thus the conclusion of Theorem \ref{thm1} follows immediately. 
In fact, if $m=M$, then $\mathcal G\bar\omega$ is a constant in $D$, hence $\bar\omega \equiv 0.$ If $m<M$, then the open set
\[O=\{s\in(m,M)\mid g'(s)>0\}\] 
must be empty. In fact, if $O$ is not empty, then 
\[\{x\in D\mid g'(\mathcal G\bar\omega(x))>0\}=\{x\in D\mid \mathcal G\bar\omega(x)\in O\}\]
 is a nonempty open subset of $D$, which yields
$\int_Dg'(\mathcal G\bar\omega)dx>0$, a contradiction. Thus either $M=m=0$ or $g$ is a constant on $[m,M]$. In both cases $\bar\omega$ is a constant in $D$.

Below we assume that 
\begin{equation}\label{ne7}
\int_Dg'(\mathcal G\bar\omega)dx> 0. 
\end{equation}
By Lemma \ref{lem1}, it suffices to show that there exists some $\delta>0$ such that  
\begin{equation}\label{bbcbtt}
\int_D\phi\mathcal G\phi dx-\int_Dg'(\mathcal G\bar\omega)(\mathcal G\phi)^2 dx+\frac{\left(\int_Dg'(\mathcal G\bar\omega)\mathcal G\phi dx\right)^2}{\int_Dg'(\mathcal G\bar\omega)dx}\geq \delta \int_D(\mathcal G\phi)^2 dx,\,\,\forall\,\phi\in \mathcal R_{\bar\omega}-\bar\omega.
\end{equation}
 Below we prove a stronger statement, i.e., 
there exists some $\delta>0$ such that
\begin{equation}\label{bbbtt}
\int_D\phi\mathcal G\phi dx-\int_Dg'(\mathcal G\bar\omega)(\mathcal G\phi)^2 dx+\frac{\left(\int_Dg'(\mathcal G\bar\omega)\mathcal G\phi dx\right)^2}{\int_Dg'(\mathcal G\bar\omega)dx}\geq \delta \int_D(\mathcal G\phi)^2 dx,\,\,\forall\,\phi\in L^2(D).
\end{equation}

Suppose by contradiction that \eqref{bbbtt} is not true. Then there exists a sequence $\{\phi_n\}_{n=1}^{+\infty}\subset L^2(D)$ such  that
\begin{equation}\label{117}
\int_D\phi_n\mathcal G\phi_n dx-\int_Dg'(\mathcal G\bar\omega)(\mathcal G\phi_n)^2 dx+\frac{\left(\int_Dg'(\mathcal G\bar\omega)\mathcal G\phi_n dx\right)^2}{\int_Dg'(\mathcal G\bar\omega)dx}<\frac{1}{n}\int_D(\mathcal G\phi_n)^2 dx\quad \forall\, n.
\end{equation}
Obviously $\phi_n\neq 0$ for each $n$.
Denote \[\zeta_n=\frac{\phi_n}{\|\mathcal G\phi_n\|_{L^2(D)}}.\] Then for each $n$ it holds that
\begin{equation}\label{115}
\|\mathcal G\zeta_n\|_{L^2(D)}=1
\end{equation}
 and 
\begin{equation}\label{1172}
\int_D\zeta_n\mathcal G\zeta_n dx-\int_Dg'(\mathcal G\bar\omega)(\mathcal G\zeta_n)^2 dx+\frac{\left(\int_Dg'(\mathcal G\bar\omega)\mathcal G\zeta_n dx\right)^2}{\int_Dg'(\mathcal G\bar\omega)dx}<\frac{1}{n}.
\end{equation}
By the condition \eqref{sscv} in Theorem \ref{thm1}, 
we have
\begin{equation}\label{118}
\int_D\zeta_n\mathcal G\zeta_n dx-\int_Dg'(\mathcal G\bar\omega)(\mathcal G\zeta_n)^2 dx\geq 0,\quad\forall\,n.
\end{equation}
Hence \eqref{1172} and \eqref{118} together yield 
\begin{equation}\label{119}
\int_D\zeta_n\mathcal G\zeta_n dx-\int_Dg'(\mathcal G\bar\omega)(\mathcal G\zeta_n)^2 dx\to 0
\end{equation}
and 
\begin{equation}\label{121}
\int_Dg'(\mathcal G\bar\omega)\mathcal G\zeta_ndx\to 0
\end{equation}
as $n\to+\infty$.

By \eqref{115} and \eqref{119}, we see that  $\{\mathcal G\zeta_n\}_{n=1}^{+\infty}$ is a bounded sequence in $H^1_0(D).$ Up to a subsequence, we assume that $\mathcal G\zeta_n\rightharpoonup \tilde u$  in $H^1_0(D)$ for some $\tilde u\in H^1_0(D)$. Then it follows from \eqref{115} that
\begin{equation}\label{1120}
\|\tilde u\|_{L^2(D)}=1,
\end{equation} 
and from \eqref{121} that 
\begin{equation}\label{1123}
\int_Dg'(\mathcal G\bar\omega)\tilde u dx=0.
\end{equation}

On the other hand, it is easy to see that
\[0\leq \int_D|\nabla \tilde u|^2 dx-\int_Dg'(\mathcal G\bar\omega)\tilde u^2 dx\leq \liminf_{n\to+\infty}\left(\int_D\phi_n\mathcal G\zeta_n dx-\int_Dg'(\mathcal G\bar\omega)(\mathcal G\zeta_n)^2 dx\right)= 0,\]
thus 
\[ \int_D|\nabla \tilde u|^2 dx-\int_Dg'(\mathcal G\bar\omega)\tilde u^2 dx= 0,\]
which means $\tilde u$ minimizes the following quadratic functional
\[\int_D|\nabla u|^2 dx-\int_Dg'(\mathcal G\bar\omega)u^2 dx\]
in $H^1_0(D)$, or equivalently, $\tilde u$ is the first eigenfunction of the operator $-\Delta-g'(\mathcal G\bar\omega)$.
Since $g\in C^{1,\alpha}[m,M]$, we see that $g'(\mathcal G\bar\omega)\in C^\alpha(\bar D)$. Moreover, by \eqref{1120} $\tilde u\neq0.$ 
Therefore we can use Lemma \ref{fefu} to get $u>0$ or $u<0$ in $D$. This contradicts \eqref{ne7} and \eqref{1123}.

\end{proof}

\begin{remark}
By Remark \ref{bbbttt} and \eqref{bbbtt}, the $\bar\omega$ in Theorem \ref{thm1} is in fact an isolated maximizer of $EC$ in $L^2(D).$

\end{remark}
\bigskip

{\bf Acknowledgements:}
{G. Wang was supported by National Natural Science Foundation of China (12001135, 12071098) and China Postdoctoral Science Foundation (2019M661261, 2021T140163).}

\phantom{s}
 \thispagestyle{empty}

\end{document}